\pgfmathtruncatemacro\distance{1}
\newtheorem{theorem}{Theorem}[section]
\newtheorem{lemma}[theorem]{Lemma}
\newtheorem{proposition}[theorem]{Proposition}
\newtheorem{remark}[theorem]{Remark}
\newtheorem{definition}[theorem]{Definition}
\newtheorem{example}[theorem]{Example}
\newcommand{\conn}{{\copyright}}
\newcommand{\nats}                  {{\mathbb N}}
\newcommand{\Fm}{\mathbf{Fm}}
\newcommand{\cL}{\mathcal{L}}
\newcommand{\Quo}{\mathsf{Hom}_s^\der}
\newcommand{\QuoMu}{\mathsf{Hom}_s^{\der_{\Mt_1}}}
\newcommand{\QuoMd}{\mathsf{Hom}_s^{\der_{\Mt_2}}}
\newcommand{\Rexp}{\mathsf{Hom}^{-1}_s}
\newcommand{\Up}{\mathsf{P}_\Uf}
\newcommand{\SubN}{\mathsf{S}}
\newcommand{\Nmatr}{\mathsf{Nmatr}}
\newcommand{\Matr}{\mathsf{matr}}
\newcommand{\cM}{\mathcal{M}}
\newcommand{\Mt}{\mathbb{M}}
\newcommand{\der}    						  {\vartriangleright}
\newcommand{\tuple}[1]                         {{\langle #1\rangle}}
\newcommand{\Uf}                                 {{\mathcal U}}
\newcommand{\bA}                                 {{\mathbf A}}
 \newcommand{\sub}{\mathsf{sub}}
\newcommand{\var}{\mathsf{var}}
\newcommand{\Ax}             {\mathsf{Ax}}
\newcommand{\Val}{\textrm{Val}}
\newcommand{\TCon}{\textrm{T}}
\newcommand{\botop}{\ensuremath{\bot\mkern-14mu\top}}
\title{Some more theorems on structural entailment relations and non-deterministic semantics}
\author{{\hspace{1mm}Carlos Caleiro} \\
	{SQIG -- Instituto de Telecomunica\c c\~oes}\\
{Departamento de Matem\'atica -- Instituto Superior T\'ecnico}\\
{Universidade de Lisboa, Portugal}\\
	\texttt{ccal@math.tecnico.ulisboa.pt} \\
	\And
	\href{https://orcid.org/0000-0000-0000-0000}{\hspace{1mm}S\'ergio Marcelino} \\
	{SQIG -- Instituto de Telecomunica\c c\~oes}\\
{Departamento de Matem\'atica -- Instituto Superior T\'ecnico}\\
{Universidade de Lisboa, Portugal}\\
	\texttt{smarcel@math.tecnico.ulisboa.pt} \\
	 \AND
	  Umberto Rivieccio \\
{Departamento de L\'ogica, Historia y Filosof\'ia de la Ciencia}\\
{Universidad Nacional de Educaci\'on a Distancia} \\
{Madrid, Spain} \\
 \texttt{umberto@fsof.uned.es}
}
\date{}
\begin{document}
\maketitle

\begin{abstract}
We extend classical work by
Janusz Czelakowski on
the closure properties of the class of matrix models of 
entailment relations
-- nowadays more commonly called \emph{multiple-conclusion
logics} --
to the setting of non-deterministic matrices (\emph{Nmatrices}),
      characterizing  
     the
    Nmatrix models of an arbitrary logic through a generalization of the standard class operators to the non-deterministic setting. 
    We highlight the main differences that appear in this more general setting, in particular:
    the possibility to obtain Nmatrix quotients using any compatible equivalence relation (not necessarily a congruence);
    the problem of determining when strict homomorphisms preserve the logic of a given Nmatrix;
    the fact that the operations of taking images and preimages cannot be swapped, which
    determines the exact sequence of operators that generates, from any complete semantics, 
    the  class of all Nmatrix models of a logic.    
   Many results, on the other hand, generalize smoothly to the non-deterministic setting: 
    we show for instance that a logic is finitely based if and only if both the class of its Nmatrix models and its complement are closed under ultraproducts.
    We conclude by mentioning possible developments in adapting the  Abstract Algebraic Logic approach to logics induced by Nmatrices 
    and the associated equational reasoning over non-deterministic algebras.
    \end{abstract}

\keywords{Entailment relations\and Multiple-conclusion logic\and Matrix models\and Non-deterministic matrices\and Abstract algebraic logic }

\section{Introduction}

Over the course of the one hundred years
elapsed since the work of A. Tarski, a \emph{logic} has been often conceived
as a \emph{consequence relation} 
$\vdash\ \subseteq \wp(Fm)\times Fm$ 
satisfying certain properties (cf.~Subsection~\ref{ss:smcl}),
the main objects of interest 
 being pairs $\langle \Gamma, \varphi \rangle$ such that $\Gamma$ is a set of formulas
 (the \emph{premisses} of an argument,  derivation or rule) 
and $\varphi$ a single formula (the \emph{conclusion}). 
This view, which has become  standard in algebraic logic,  is by no means the only possible one,
the study of logics as sequent systems 
being perhaps the most well-known and time-honoured alternative. 

A \emph{sequent} is usually defined as a pair $\langle \Gamma, \Delta \rangle$ such that $\Gamma$ and $\Delta$
are both sets (or sequences, or multisets) of formulas. In a sequent system
one may  not only express the fact that the formulas  in $\Delta$ follow (or are derivable) from those in $\Gamma$, but also that a certain sequent $\langle \Gamma, \Delta \rangle$ follows (or is derivable) from a set of sequents $\{ \langle \Gamma_1, \Delta_1 \rangle, \ldots \langle \Gamma_n, \Delta_n \rangle \}$.

A third and heretofore  less explored alternative
(which can be traced back to~\cite{Scott})
is the view of logics as  \emph{entailment relations}; in the present paper, following
the tradition initiated in the book~\cite{ShoesmithSmiley},
we shall call them
\emph{multiple-conclusion logics}.
This  is somewhat a compromise between the preceding two alternatives,
for a logic is now regarded  as a relation $\der\ \subseteq \wp(Fm)\times \wp(Fm)$.
The main objects of interest are thus sequents $\langle \Gamma, \Delta \rangle $, with $\Gamma$ and $\Delta$
 both sets of formulas, but the focus is on valid sequents only: the formalism does not allow one to speak of derivations among sequents. Ordinary  consequence relations -- 
which, by contrast,   we are going to call \emph{single-conclusion logics} --  may  be retrieved in this setting as the special case
 where the set of conclusions $\Delta$ is required to be a singleton.
 
The systematic investigation of 
multiple-conclusion logics
was inaugurated  in  1978 with the book by D.~Shoesmith and T.~Smiley~\cite{ShoesmithSmiley}. 
This new approach soon attracted the interest of prominent logicians from the Polish school,
who proceeded to apply and extend their techniques 
(which had been developed for the
single-conclusion case
by J.~{\L}os, 
R.~Suszko and R.~W\'ojcicki) 
to the more general multiple-conclusion setting.  
This endeavour  resulted in a few significant contributions, notably two papers by
J.~Zygmunt~\cite{Zygmunt1979} 
and J.~Czelakowski~\cite{czela1983}, but  research on  
 multiple-conclusion  logics
appears to have been  otherwise relatively dormant in subsequent decades
(however, see e.g.~\cite{smiley1996,rumfitt2000,beall2000}), 
until a  recent revival: see e.g.~\cite{wollic19,synt,marcelino2022,EPTCS357,newfibring,rinaldi2019,ripley2021,chemla2021}.

Czelakowski's paper~\cite{czela1983}, which is the main inspiration for our present research, focused in particular on
the closure properties of the class of matrix models of a 
multiple-conclusion
logic\footnote{Czelakowski's results are reviewed in Section 3 of J.M.~Font and R.~Jansana's
chapter included in the present book.}.
Formally, a \emph{(logical) matrix} is a fairly simple first-order structure consisting of a pair $\tuple{\bA , D}$ where
$\bA$ is an algebra and
 $D\subseteq A$  a subset of designated elements. Matrices have been considered  at least since R.~W\'ojcicki's work~\cite{Woj}  as the standard algebra-based models of (single-conclusion) logics. 

The fact that a matrix is a little more than an ordinary algebra allows for a smooth extension
of many universal algebraic notions, such as subalgebra, homomorphism and (ultra)product. 
Considering the associated class operators,
one can then investigate the relationship among the closure properties of the class $\Matr(\mathcal{L})$ of all matrix models 
of a given 
(single- or multiple-conclusion)
logic $\mathcal{L}$, 
the type of first-order
sentences that axiomatize $\Matr(\mathcal{L})$ and  logical properties of $\mathcal{L}$ itself (structurality, compactness, etc.).
This kind of questions is addressed in the paper~\cite{czela1983}: in the present work, proceeding in parallel with Czelakowski's, we shall endeavour to extend the investigation to  the relationship between logical properties of a given  
multiple-conclusion logic
$\der$
and  closure properties of the corresponding class $\Nmatr(\der)$ of its \emph{non-deterministic} matrix models.

A \emph{non-deterministic matrix} (\emph{Nmatrix}) is a pair $\tuple{\bA , D}$ which differs from an ordinary logical matrix
in that $\bA$ is a \emph{multialgebra} rather than a standard algebra.
That is, $\bA$ consists of a set $A$ equipped with non-deterministic algebraic operations, each of them  being a function
of type  
$A^k\to \wp(A)\setminus \{\emptyset\}$
which associates a non-empty set of possible outputs to every $k$-tuple of elements from $A$
(see~\cite{Ma1934,Gr1962,No79PhDThesis,cirulis2003multi,CoLe2003,WaMe2018,Golzio18}
for different frameworks in which the theory of multialgebras has been studied). For a more formal definition
of multialgebra, see Section~\ref{sec:prelim}.

The idea of considering non-deterministic semantics in logic can be traced back at least to~\cite{quine,ivlev,creth}, but its systematization in the notion of  Nmatrix began in~\cite{Avron,avlev05}. The initial motivation 
originated from the analysis of 
modal and paraconsistent  
logics, and was mainly related to the compression power afforded by  non-determinism, 
which allows one, for instance,
to characterize, by a single finite Nmatrix, logics that 
are not complete with respect to any finite set of 
finite deterministic matrices. {The simplest of such examples is the smallest logic in any given signature, axiomatized by the empty set of rules (see Example~\ref{ex2val}).}  
Non-determinism moreover provides a number of modular  and often constructive bridges between semantics and analytic proof-calculi; for further information  
we refer the reader to~\cite{avron2005,Avron2012atLICS,AvronBK07,Baaz2013,OnAxRexp,newfibring}.

It is well known that the logic defined by a finite deterministic matrix
enjoys certain properties -- besides the one known as \emph{cancellation}~\cite{SS1971}, it is also compact and
locally tabular~\cite{MR2017};  the two latter in fact also hold for any logic  determined
by a finite set of finite matrices~\cite[Thm.~3.15]{finval}.
When considering Nmatrices, the compactness result still applies~\cite{avlev05},
but  cancellation and local tabularity may fail (see
Remark~\ref{rem:noprops}).

As soon as  we consider the question of extending Czelakowski's~\cite{czela1983} results to the setting of Nmatrices,
we are immediately faced with the problem that there is no standard notion of a \emph{reduced model}. 
Indeed, it makes perfect sense to factor an Nmatrix
$\tuple{\bA , D}$ by any equivalence relation compatible with $D$,
obtaining  a quotient that  will define, in general, a weaker logic:
 this is in sharp contrast to all we know about the role of congruences and the Leibniz operator on deterministic matrices,
 for the quotient of a matrix by an equivalence relation
 is not guaranteed to be a deterministic matrix.
 In this work we shall see that, despite these difficulties, one can still recover (adapted versions of) some of the nice results of matrix semantics in this more general setting.

The rest of the paper is organized as follows. Section~\ref{sec:prelim} recalls the basics on multiple-conclusion logics and Nmatrices, and illustrates their use. Section~\ref{sec:nmatrices} introduces and studies Nmatrix homomorphisms, their properties, and the corresponding constructions. In Section~\ref{sec:results} we prove our main results, characterizing compact and finitely based multiple-conclusion logics using ultraproducts of Nmatrices, and provide a general characterization of the Nmatrix models of a multiple-conclusion logic. We conclude, in Section~\ref{sec:conc}, with a discussion of the results obtained and an outline of future work.

\section{Preliminaries}\label{sec:prelim}

\paragraph{Propositional languages, algebras and multialgebras.}
A \emph{signature}\index{signature} is an algebraic similarity type, i.e.~a family of sets of connectives $\Sigma=\{\Sigma^{k}:k<\omega\}$ indexed by arity. 
A \emph{$\Sigma$-multialgebra} \index{multialgebra} is a tuple $\bA=\tuple{A,\cdot_\bA}$ such that 
$\conn_\bA:A^k\to \wp(A)\setminus \{\emptyset\}$
for each $\conn\in \Sigma^{k}$.
The notion of ordinary algebra is recovered 
by allowing $\conn_\bA$ to output only singleton sets.
In this paper we shall identify algebras with this particular case  
of (deterministic) multialgebras.

A \emph{homomorphism} \index{multialgebra homomorphism} between multialgebras\footnote{For the sake of readability, we write 
$\bA_1=\tuple{A_1,\cdot_1}$ instead of $\bA_1=\tuple{A_1,\cdot_{\bA_1}}$, etc.}
$\bA_1=\tuple{A_1,\cdot_1}$
and $\bA_2=\tuple{A_2,\cdot_2}$ 
 is a function $h:A_1\to A_2$ such that $h(\conn_1(a_1,\ldots,a_k))\subseteq \conn_2(h(a_1),\ldots,h(a_k))$
 for each  $\conn\in \Sigma^{k}$ and $a_1,\ldots,a_k\in A_1$.
 When $\bA_1$ and $\bA_2$ are algebras, this gives us
 the usual notion of algebraic homomorphism. We denote by $\mathsf{hom}(\bA_1,\bA_2)$  the set of multialgebra homomorphisms between $\bA_1$ and $\bA_2$.
 
 Throughout the paper we will consider a fixed countable signature $\Sigma$ and
 a denumerable set of propositional variables $P$,  denoting by 
$Fm$
(also commonly denoted by $L_\Sigma(P)$)
the language containing the formulas built from $\Sigma$ and $P$ in the usual way. 
  $\Fm=\tuple{Fm,\cdot_\Fm}$    may be seen as
  the absolutely free $\Sigma$-algebra whose interpretation 
   of connectives $\conn\in \Sigma$ is the function
   $\conn_\Fm(\varphi_1,\ldots,\varphi_k)=\conn(\varphi_1,\ldots,\varphi_k)$
    yielding the more complex formula built from the connective and the input subformulas.
    When viewing $\Fm=\tuple{Fm,\cdot_\Fm}$
    as a multialgebra, one has  $\conn_\Fm(\varphi_1,\ldots,\varphi_k)=\{\conn(\varphi_1,\ldots,\varphi_k)\}$.  
A \emph{substitution} is a function $\sigma: P\to Fm$ that is extended to an endomorphism
$\cdot^\sigma:Fm\to Fm$ in the usual way. As usual, we let $\sub(\Gamma)$ denote the set of all subformulas of formulas in $\Gamma\subseteq Fm$.
Given $\varphi\in Fm$, we denote by $\mathsf{depth}(\varphi)$ the depth of 
$\varphi$, defined inductively by $\mathsf{depth}(\varphi)=0$ if $\varphi\in P$,
and $\mathsf{depth}(\conn(\varphi_1,\ldots,\varphi_k))=1+\mathsf{max}(\{\mathsf{depth}(\varphi_i):1\leq i\leq k\})$. We assume that $\mathsf{max}(\emptyset)=0$, so that $\mathsf{depth}(\conn)=1$ in case $\conn\in\Sigma^0$.

\paragraph{Single-conclusion and multiple-conclusion logics.}
\label{ss:smcl}
A \emph{multiple-conclusion logic} \index{multiple-conclusion logic} on $Fm$ is a relation 
$\der\ \subseteq \wp(Fm)\times \wp(Fm)$ satisfying the  properties (O), (D), (C) and (S) listed below, for every $\Gamma,\Delta,\Gamma',\Delta'\subseteq Fm$.

\begin{itemize}
 \item[(O)] 
 
 If $\Gamma\cap \Delta\neq \emptyset$ then $\Gamma\der \Delta$. 
 
  \item[(D)] 
    
    If $\Gamma\der \Delta$ then $\Gamma,\Gamma'\der \Delta,\Delta'$. 

  \item[(C)] 
  
  If $\Gamma,\Omega\der \overline{\Omega},\Delta$ for each $\Omega\subseteq Fm$ 
  (where $\overline{\Omega}=Fm\setminus {\Omega}$), then $\Gamma\der \Delta$. 
  
  
 \item[(S)] If $\Gamma\der \Delta$ then $\Gamma^\sigma\der \Delta^\sigma$ for each substitution $\sigma:P\to Fm$.  

\end{itemize}
Property (C) is usually known as \emph{cut for sets} or \emph{transitivity}, but we prefer to call it \emph{case exhaustion}.
(O) is usually known as \emph{overlap}  or \emph{reflexivity}, (D) as \emph{dilution}  or \emph{monotonicity}, and 
(S) a \emph{substitution invariance} or \emph{structurality} (see~\cite{ShoesmithSmiley,Woj,Scott}). 
{
Note that in (D), (C) and  throughout the paper we adopt the convention of writing $\Gamma, \Gamma'$ instead of
$\Gamma \cup \Gamma'$, etc.}

The relation $\der$ is \emph{compact}\index{compact} if it further satisfies the property (F) below for every $\Gamma\subseteq Fm$:

\begin{itemize}
 \item[(F)]  
if $\Gamma\der \Delta$ then there exist finite sets $\Gamma_0\subseteq \Gamma$ and
$\Delta_0\subseteq \Delta$ such that $\Gamma_0\der \Delta_0$.
\end{itemize}

\noindent
{ 
The  standard properties of a single-conclusion logic  \index{single-conclusion logic} can be recovered 
by  
specializing 
the properties 
(O), (D), (S) and (F) by  allowing only singleton sets for conclusions, letting $\Delta=\Delta_0$ and $\Delta'=\emptyset$.} However, property (C) needs a different formulation (C${}^\TCon$), applicable to every $\Gamma,\Delta\subseteq Fm$ and $\varphi\in Fm$:

\begin{itemize}
 \item[(C${}^\TCon$)]   
if $\Gamma,\Delta\der \varphi$ and $\Gamma\der \psi$ for each $\psi\in\Delta$, then $\Gamma\der \varphi$.
\end{itemize}

In order to avoid  confusion, we  shall use the symbol   $\vdash$ for single-conclusion logics. 
Given a  multiple-conclusion logic $\der$, we denote by  
 $\vdash_\der\subseteq  \wp(Fm)\times Fm$ 
the \emph{single-conclusion companion}  \index{single-conclusion companion} of $\der$
defined by   
{
$\Gamma\vdash_\der \varphi$ if and only if
$\Gamma\der \{\varphi\}$.}
 In general there may be different multiple-conclusion logics sharing the same single-conclusion companion~\cite[Ch.~5]{ShoesmithSmiley}.

\paragraph*{Finitely based logics.}
Given a set $R\subseteq \wp(Fm)\times\wp(Fm)$ of rules, let $\der_R$ be the smallest multiple-conclusion logic such that ${R}\,{\subseteq}\der_R$. 
We say that a rule $\frac{\Gamma}{\Delta}\in \wp(Fm)\times\wp(Fm)$ is \emph{finite }
when $\Gamma$ and $\Delta$ are finite.
We further say that a multiple-conclusion logic $\der$ is \emph{finitely based} \index{finitely based} whenever
there is a finite set of finite rules $R$ 
such that $\der{=}\der_R$.\smallskip

To any single-conclusion logic $\vdash$ we can associate a multiple-conclusion logic 
by setting $\Gamma \der_\vdash \Delta$ iff $\Gamma\vdash \varphi$ for some $\varphi\in \Delta$.
Then  $ \der_\vdash$ is the least multiple-conclusion logic \index{least multiple-conclusion companion}  having $\vdash$ as single-conclusion companion,
and mirrors certain  properties of $\vdash$ (in particular, $\vdash$ is finitely axiomatizable using single-conclusion rules whenever $\der_\vdash$ is finitely based).

\subsection{Non-deterministic matrix semantics}  
\label{ss:ndms} 

\begin{definition}
Given a signature $\Sigma$, a \emph{$\Sigma$-Nmatrix} \index{non-deterministic matrix (Nmatrix)}  is a tuple $\Mt=\tuple{A,\cdot_\Mt,D}$ where $A$ is the set of \emph{truth values}\index{truth values}, or \emph{elements},
 $\bA_\Mt=\tuple{A,\cdot_\Mt}$ is a $\Sigma$-multialgebra, and
 $D\subseteq A$ is a set of \emph{designated elements}\index{designated elements}. When 
 $\bA_\Mt$ is an ordinary algebra, $\Mt$ is just a (deterministic) logical matrix.

A \emph{valuation} \index{valuation}  on $\Mt$
is an element of $\Val(\Mt)=\mathsf{hom}(\Fm,\bA_\Mt)$,
that is, a function $v:Fm\to A$ such that
$v(\conn(\varphi_1,\ldots,\varphi_k))\in
\conn_\Mt(v(\varphi_1),\ldots,v(\varphi_k))$
for every $\conn\in \Sigma^{k}$ and $\varphi_1,\ldots,\varphi_k\in Fm  
$.

\end{definition}

In contrast to the deterministic case, a valuation $v$ on a Nmatrix $\Mt=\tuple{A,\cdot_\Mt,D}$ is not completely determined by the values 
assigned to propositional variables.  
However, it is  true that if $\Gamma=\sub(\Gamma)$ is a set of formulas closed under subformulas then a 
function 
$w:\Gamma\to A$ satisfying $w(\conn(\varphi_1,\ldots,\varphi_k))\in\conn_\Mt(w(\varphi_1),\ldots,w(\varphi_k))$ for every complex formula $\conn(\varphi_1,\ldots,\varphi_k)\in\Gamma$, or \emph{prevaluation}\index{prevaluation},
can always be extended to a valuation~\cite{avlev05}.

\smallskip

Any Nmatrix $\Mt$  
induces the multiple-conclusion logic
$\der_\Mt {\subseteq}\, \wp(Fm)\times \wp(Fm)$ defined
by ${\Gamma}{\der_\Mt} {\Delta}$ if and only if 
$v[\Gamma]\subseteq D$ implies $v[\Delta]\cap D\neq \emptyset$ for every
$v\in \Val(\Mt)  
$.
Since logics over a given signature 
are closed for arbitrary intersections,  
we extend this notion to any class $\cM=\{\Mt_i:i\in I\}$ of $\Sigma$-Nmatrices in the expected way,
by setting $\der_\cM=\bigcap_{i\in I}\der_{\Mt_i}$.
However, as observed in~\cite{newfibring}, almost every logic can be given by a single Nmatrix, a sufficient condition being that $\Sigma$ contain at least a connective of arity greater than $1$.
The  
single-conclusion relation associated to 
$\cM$ can be recovered by $\vdash_\cM{=}\vdash_{\der_\cM}$.

Given a multiple-conclusion logic $\der$,
we say that an Nmatrix  $\Mt$ is \emph{$\der$-sound} \index{$\der$-sound}  whenever $\der{\subseteq} \der_\Mt$.
We further denote by $\Nmatr(\der)$ the class of all $\der$-sound Nmatrices 
and by  $\Matr(\der)$ the class
of
 deterministic Nmatrices in $\Nmatr(\der)$.

\subsection{Illustrations: Logiken ohne Eigenschaften}\label{sec:noprops}

The paper~\cite{font2013} by J.M.~Font considers a most simple (single-conclusion) logic, dubbed $\mathcal{I}$, whose language consists of a single binary connective $\to$, axiomatized by the following well-known rule schemata 
(the identity axiom and \emph{modus ponens}):
$$
  \frac{}{p\to p} \ \mathsf{r}_{\mathsf{id}} \qquad \qquad  \qquad 
  \frac{p\,,\,p\to q}{q} \ \mathsf{r}_{\mathsf{mp}}
$$

From the point of view of~\cite{font2013} -- which is that of Abstract Algebraic Logic --
the main interest in  this system stems from the observation that  $\mathcal{I}$ may be described as a \emph{Logik ohne Eigenschaften}: in Font's words, \guillemotleft the properties of this logic are worth studying, even if the conclusion of such study may be that ``this logic has almost no properties''\guillemotright~\cite[p.~436]{font2013}.
In fact, rather than having almost no properties,   
one might more accurately say that Font's logic enjoys a number of
interesting ``negative properties'', i.e.~$\mathcal{I}$ is a particularly simple witness to the failure of certain properties
that are very widespread among the logics known in the algebraic community as \emph{protoalgebraic} (more on this below).

In this subsection we are going to  take a look  both at Font's logic and at  logics over the same language  
obtained by removing one or both of the above rule schemata:  we shall thus be dealing with very weak systems,
which lack certain properties that even
$\mathcal{I}$ has (e.g.~protoalgebraicity). 
These  
will serve  as examples for illustrating  the new possibilities brought by non-deterministic matrices. 
We shall first consider the logic given by the empty set of rules
($R_\mathsf{u}=\{\}$)
and  the one defined by  \emph{modus ponens}\index{modus ponens}  alone 
($R_\mathsf{mp}= 
\{  \mathsf{r}_{\mathsf{mp}} \}
$).

\begin{example}\label{ex2val}
Consider the multiple-conclusion logics $\der_{R_\mathsf{u}}$ and $\der_{R_\mathsf{mp}}$. 
We note that the corresponding single-conclusion companions are not locally tabular, hence, not given by any finite set of finite matrices~\cite{finval}.\smallskip

For $n,m\in \nats\cup\{\omega\}$ let $U_n=\{\bot_i:0\leq i < n\}$, $D_m=\{\top_i:0\leq i < m\}$ and
$A_{n,m}=U_n\cup D_m$.
Note that the cardinality of $A_{n,m}$ is $n+m$ whenever $n,m\in \nats$, otherwise it is denumerable.

 Let us describe some Nmatrices in $\Nmatr(\der_{R})$ for $R\in \{R_\mathsf{u},R_\mathsf{mp}\}$. Consider the following families of Nmatrices, where $x,y\in A_{n,m}$  { 
(To improve readability, we abbreviate $\cdot_{\mathsf{u}}=\cdot_{\mathbb{U}_{n,m}}$
 and $\cdot_{\mathsf{mp}}=\cdot_{\mathbb{MP}_{n,m}}$. No confusion may arise, for one can consider  $n$ and $m$  fixed; similar conventions are used throughout the paper).} \\

$\mathbb{U}_{n,m}=\tuple{A_{n,m},\cdot_\mathsf{u},D_m}$ with
$\to_\mathsf{u}(x,y)=A_{n,m}$;\\

$\mathbb{MP}_{n,m}=\tuple{A_{n,m},\cdot_\mathsf{mp},D_m}$ with $\to_\mathsf{mp}(x,y)=
\begin{cases}
    U_n & \text{ if }x\in D_m \text{ and }y\in U_n\\
    A_{n,m} & \text{ otherwise.}
\end{cases}$\\

Note that $\der_{R_{\mathsf{u}}}{\subseteq} \der_{\mathbb{U}_{n,m}}$ and
$\der_{R_{\mathsf{mp}}}{\subseteq} \der_{\mathbb{MP}_{n,m}}$.

Indeed, in~\cite{wollic,wollic19,newfibring} we have shown that $\der_{R_{\mathsf{u}}}{=}\der_{\mathbb{U}_{1,1}}$
and $\der_{R_{\mathsf{mp}}}{=}\der_{\mathbb{MP}_{1,1}}$.

As $\mathbb{U}_{1,1}$ is a  
 { 
 subNmatrix (see Definition~\ref{def:6})} of all the Nmatrices in the $\mathbb{U}_{n,m}$ family, and the same is true about $\mathbb{MP}_{1,1}$ in the $\mathbb{MP}_{n,m}$ family, it is straightforward to conclude that
$\der_{R_{\mathsf{u}}}{=}\der_{\mathbb{U}_{1,1}}{=}\der_{\mathbb{U}_{n,m}}$, and
  $\der_{R_{\mathsf{mp}}}{=}\der_{\mathbb{MP}_{n,m}}{=}\der_{\mathbb{MP}_{1,1}}$ for $n,m>0$.

 ~ \hfill$\triangle$

\end{example}

Of course, finite Nmatrices cannot be expected to characterize all possible logics, as we next illustrate.

\begin{example}\label{noprops2}

 Let 
 $R_\mathsf{id}=\{r_\mathsf{id}\}$  
and consider 
$\mathbb{D}_{n,m}=\tuple{A_{n,m},\cdot_\mathsf{id},D_m}$  with  { 
(
$\cdot_{\mathsf{id}}=\cdot_{\mathbb{D}_{n,m}}$ and) }
$$\to_\mathsf{id}(x,y)=
\begin{cases}
    D_m & \text{ if }x=y\\
    A_{n,m} & \text{ otherwise. }
\end{cases}$$

First we note that %
$\der_{R_{\mathsf{id}}}{\subseteq}\der_{\mathbb{D}_{n,m}}$ for $n,m\in \nats\cup\{\omega\}$. Let us further check that (i) $\vdash_{R_{\mathsf{id}}}{=}\vdash_{\mathbb{D}_{1,2}}$, (ii)
   $\der_{R_{\mathsf{id}}}{=}\der_{\mathbb{D}_{\omega,\omega}}$, and
(iii) $\der_{R_{\mathsf{id}}}{\subsetneq}\der_{\mathbb{D}_{n,m}}$ if $n\in \nats$ or $m\in\nats$.\\

(i) To see that $\vdash_{R_{\mathsf{id}}}{=}\vdash_{\mathbb{D}_{1,2}}$,   assume that $\Gamma\not\vdash_{R_{\mathsf{id}}} \varphi$. We know that $\varphi\notin \Gamma$, and either $\varphi\in P$ or 
$\varphi=\varphi_1\to\varphi_2$ with $\varphi_1\neq \varphi_2$. 
We will show that in any case there is $v\in \Val(\mathbb{D}_{1,2})$
with $v[\Gamma]\subseteq D_2$ and $v(\varphi)\in U_1$.
If $\varphi\in P$ consider
$$v(\psi)=
\begin{cases}
 \bot_0&\mbox{ if }\psi=\varphi\\
  \top_0 &\mbox{ if }\psi\neq\varphi.
\end{cases}
$$
It is clear that $v(\psi_1\to\psi_2)=\top_0\in {D_2}\, {\subseteq} \to_{\mathsf{id}}(v(\psi_1),v(\psi_2))$ for all $\psi_1,\psi_2\in Fm$, even if $v(\psi_1)\neq v(\psi_2)$, and thus $v\in\Val(\mathbb{D}_{1,2})$. 

Further, $v[\Gamma]=\{\top_0\}\subseteq D_2$, $v(\varphi)=\bot_0\in U_1$, and we conclude that $\Gamma\not\vdash_{\mathbb{D}_{1,2}}\varphi$.
\smallskip

If instead $\varphi=\varphi_1\to\varphi_2$ with 
$\varphi_1\neq \varphi_2$, 
we can consider
  $$v(\psi)=
\begin{cases}
 \bot_0 &\mbox{ if }\psi=\varphi\\
\top_0 &\mbox{ if }\psi=\varphi_1\\
\top_1 &\mbox{ otherwise. } 
\end{cases}$$

Easily, $v(\varphi)=v(\varphi_1\to\varphi_2)=\bot_0\,{\in}\to_{\mathsf{id}}(v(\varphi_1),v(\varphi_2))$ because $v(\varphi_1)\neq v(\varphi_2)$, as necessarily $v(\varphi_1)=\top_0$ and $v(\varphi_2)=\top_1$.

Further, if either $\psi_1\neq\varphi_1$ or $\psi_2\neq\varphi_2$ then 
$v(\psi_1\to\psi_2)\in\{\top_0,\top_1\}\subseteq {D_2}\, {\subseteq} \to_{\mathsf{id}}(v(\psi_1),v(\psi_2))$, even if $v(\psi_1)\neq v(\psi_2)$, and thus $v\in\Val(\mathbb{D}_{1,2})$. 

Moreover, 
$v[\Gamma]\subseteq\{\top_0,\top_1\}\subseteq D_2$, $v(\varphi)=\bot_0\in U_1$, and so $\Gamma\not\vdash_{\mathbb{D}_{1,2}}\varphi$.
\smallskip

(ii) Assuming $\Gamma\not\der_{R_{\mathsf{id}}}\Delta$ and using cut for sets, we know that there exists $\Omega\subseteq Fm$ with $\Gamma\subseteq\Omega$, $\Delta\subseteq \overline{\Omega}=Fm\setminus\Omega$ such that
$\Omega\not\der_{R_{\mathsf{id}}}\overline{\Omega}$. We fix some enumeration of formulas $e:Fm\to \nats$  and consider 
  $$v(\varphi)=\begin{cases}
      \top_{e(\varphi)} &\text{ if }\varphi\in \Omega\\
            \bot_{e(\varphi)} &\text{ otherwise.}
  \end{cases}$$ Since $\psi\to\psi\in \Omega$ for every $\psi\in Fm$ we conclude that $v(\psi\to\psi)\in\, \to_{\mathsf{id}}(v(\psi),v(\psi))$.
  Since $v\in\Val(\mathbb{D}_{\omega,\omega})$, $v[\Gamma]\subseteq D_\omega$ and $v[\Delta]\subseteq U_\omega$ we confirm that
$\Gamma\not\der_{\mathbb{D}_{\omega,\omega}}\Delta$.\smallskip

(iii) For $k\in \nats$, let $\Gamma_k=\{p_i:0\leq i \leq k\} $ and $\Delta_k=\{p_i\to p_j:0\leq i< j\leq k\}$.\smallskip

It is easy to see that 
$\Gamma_m \der_{\mathbb{D}_{n,m}} \Delta_m$ if $m\in \nats$, as for any valuation such that $v[\Gamma_m]\subseteq D_m$, by the pigeonhole principle, there must exist $0\leq i< j\leq m$ with $v(p_i)=v(p_j)$ and thus with $v(p_i\to p_j)\in D_m$.

Analogously, 
$ \der_{\mathbb{D}_{n,m}} \Gamma_n\cup\Delta_n$ if $n\in \nats$, as for any valuation such that $v[\Gamma_n]\subseteq U_n$, by the pigeonhole principle, there must exist $0\leq i< j\leq n$ with $v(p_i)=v(p_j)$ and thus with $v(p_i\to p_j)\in D_m$.

Of course, we have that $\Gamma_k \not\der_{\mathbb{D}_{\omega,\omega}} \Delta_k$ and $ \not\der_{\mathbb{D}_{\omega,\omega}} \Gamma_k\cup\Delta_k$ for every $k\in\nats$, and thus  $\der_{R_{\mathsf{id}}}{\subsetneq}\der_{\mathbb{D}_{n,m}}$ if $n\in \nats$ or $m\in\nats$.\\

~\hfill$\triangle$
\end{example}

\smallskip

\begin{example}\label{ex:proto}

 {

Let now
$R_\mathcal{I}=\{r_\mathsf{id},r_\mathsf{mp}\}$.  
From the standpoint of Abstract Algebraic Logic, the single-conclusion
logic $\vdash_{R_\mathcal{I}}$ 
 is interesting in that it is a non-trivial but very weak and most simple  member of the family of \emph{protoalgebraic logics}\index{protoalgebraic logics}. The latter,
extensively studied by Czelakowski (see e.g.~\cite{czela2001}),
are the basis of the so-called \emph{Leibniz hierarchy}, and
 arguably form
 the widest
class of logics
to which general algebraic methods may be successfully applied.

Protoalgebraicity may be characterized in several alternative ways, among which the following one provides 
a justification for the special interest in the logic $\mathcal{I}$: a single-conclusion logic $\vdash$ is protoalgebraic 
if it possesses a set $\Delta(x,y)$ of formulas in at most two variables satisfying 
$\vdash \Delta(x,x)$ and
$x, \Delta(x,y) \vdash y$.
It is natural to think of $\Delta(x,y)$ as a generalized implication (or biconditional) connective;
 the two previous schemata may thus be read as generalized forms of, respectively, the identity axiom and
the \emph{modus ponens} rule.

As noted by Font~\cite[p.~Prop.~5.2]{font2013}, 
the logic 
$\mathcal{I}$ does not have an algebraic semantics -- let alone an equivalent one in Blok and Pigozzi's sense --  
and it is not finite-valued in any sense, i.e.~it is not determined by any single finite matrix (deterministic or not; indeed, as we shall see,
not even a finite set of finite Nmatrices would suffice).} Consider the following family of Nmatrices, with $n,m\in \nats\cup\{\omega\}$,  { 
($\cdot_{\mathsf{I}}=\cdot_{\mathbb{I}_{n,m}}$ and)}:\\

$\mathbb{I}_{n,m}=\tuple{A_{n,m},\cdot_\mathsf{I},D_m}$ with 
$\to_\mathsf{I}(x,y)=
\begin{cases}
    D_m & \text{ if }x=y \\
    U_n & \text{ if }x\in D_m \text{ and }y\in U_n\\
    A_{n,m} & \text{ otherwise. }
\end{cases}$\\

Note that 
$\der_{R_{\mathsf{I}}}{\subseteq}\der_{\mathbb{I}_{n,m}}$, in all cases.
 
 {
 The proof that $\der_{R_{\mathsf{I}}}{=}\der_{\mathbb{I}_{\omega,\omega}}$  
 is analogous to that in Example ~\ref{noprops2} (ii) and shall be omitted.} 
Let us show that
$\der_{R_{\mathsf{I}}}{\subsetneq}\der_{\mathbb{D}_{n,m}}$ if $n\in \nats$ or $m\in\nats$. For $k\in \nats$, consider:
\begin{align*}
 \Gamma_k&=\{p_i:0\leq i\leq k \}\cup \{(p_i\to p_j)\to p_{k+1}:0\leq i<j\leq k\},\textrm{ and}\\
\Delta_k&=\{p_i\to p_{k+1}:0\leq i\leq k\}\cup \{(p_i\to p_j)\to p_{k+1}:0\leq i<j\leq k\}.
\end{align*}

It is simple to see that 
$\Gamma_m \der_{\mathbb{I}_{n,m}} p_{m+1}$ if $m\in \nats$, as for any valuation such that $v[\Gamma_m]\subseteq D_m$, by the pigeonhole principle, there must exist $0\leq i< j\leq m$ with $v(p_i)=v(p_j)$ and thus with $v(p_i\to p_j)\in D_m$, and since $(p_i\to p_j)\to p_{m+1}\in\Gamma_m$ it must be the case that $v(p_{m+1})\notin U_n$, i.e., $v(p_{m+1})\in D_m$.

In a similar way,  
$\Delta_n \der_{\mathbb{I}_{n,m}} p_{n+1}$ if $n\in \nats$, as for any valuation such that $v(\Delta_n)\subseteq D_m$, 
either $v(p_{n+1})\in D_m$ or else $v(p_i)\in U_n$ for all $0\leq i\leq n$. But then, by the pigeonhole principle, there must exist $0\leq i< j\leq m$ with $v(p_i)=v(p_j)$ and thus with $v(p_i\to p_j)\in D_m$, and since $(p_i\to p_j)\to p_{n+1}\in\Delta_n$ it must be the case that $v(p_{n+1})\notin U_n$, i.e., $v(p_{n+1})\in D_m$ anyway.

Of course, we have that $\Gamma_k \not\der_{\mathbb{I}_{\omega,\omega}} p_{k+1}$ and $ \Delta_k\not\der_{\mathbb{I}_{\omega,\omega}} p_{k+1}$ for every $k\in\nats$, and thus  $\der_{R_{\mathsf{I}}}{\subsetneq}\der_{\mathbb{I}_{n,m}}$ if $n\in \nats$ or $m\in\nats$.
  \hfill$\triangle$
\end{example}

\begin{remark}\label{rem:noprops}

It is known that, for a single-conclusion logic,
being characterizable by a single finite matrix boils down to the properties of cancellation, local tabularity and 
finite determinedness. Cancellation and local tabularity are well known 
{
(see, respectively,~\cite{SS1971}
and 
\cite[p.~6]{DBLP:books/daglib/0030819})}. The notion of finite determinedness, 
 introduced in~\cite{finval}, can be formulated as follows~\cite[Def.~3.1, Lemma~3.2]{finval}.
A logic is \emph{finitely determined}\index{finitely-determined} if and only if 
there exists a  natural number $n$ such that, if $\Gamma^\sigma \der \varphi^\sigma $
for every $\sigma:P\to \{ p_1,\ldots,p_n \}$,  then 
$\Gamma \der \varphi $.

For  Nmatrices we do not have any similar characterization, and indeed any of the above-mentioned properties 
might fail.
\smallskip

For any $\Sigma$-Nmatrix $\Mt$
such that $\Sigma$ does not contain any $0$-ary connectives, one can easily check
that $\der_\Mt$ satisfies cancellation, and therefore there must exist a $\Sigma$-matrix $\Mt'$ such that
$\der_{\Mt}{=}\der_{\Mt'}$, even if $\Mt'$
might be non-denumerable~\cite{ShoesmithSmiley}.
However, if in $\Mt$ the interpretation of a $0$-ary connective $\botop$ contains both designated and non-designated elements, then
cancellation fails, for  $\botop\der_\Mt\botop$ and $\not\der_\Mt\botop$ but $\var(\botop)\cap\var(\botop)=\emptyset$ (see~\cite{newfibring,MaCaRi2018PlugandPlay}).\smallskip

We can use the Nmatrices in the previous examples to illustrate how logics induced by a finite Nmatrix may also fail to satisfy the other above-mentioned properties. Consider the set of variables $P_k=\{p_i:0\leq i< k\}$ for $k\in \nats$.

Regarding local tabularity, note for instance that
$\dashv\vdash_{\mathbb{U}_{n,m}}$  for $n,m\geq 1$ divides $Fm_k=\{\varphi\in Fm:\var(\varphi)\subseteq P_k\}$
in an infinite number of classes. Namely, no two distinct formulas $\varphi,\psi\in Fm_k$ are interderivable in the logic induced by $\mathbb{U}_{n,m}$.

Regarding finite determinedness,
 the logic $\der_{R_\mathsf{mp}}$ 
is characterizable by the 2-valued Nmatrix $\mathbb{MP}_{1,1}$, but it 
is not $k$-determined for any $k\in\nats$. 
Namely, for each $k$, consider
$$\Gamma_k=\{p_i\to p_j: 0\leq i<j\leq k\}\cup\{(p_i\to p_i)\to p_{k+1}: 0\leq i\leq k\}.$$
We have that $\Gamma_k\not\der_{\mathbb{MP}_{1,1}} p_{k+1}$.
However, by the pigeonhole principle, for every $\sigma:P\to P_k$
we must have $0\leq i<j\leq k$ such that $\sigma(p_i)=\sigma(p_j)=p_\ell\in P_k$.
Hence $(p_i\to p_j)^\sigma=p_\ell\to p_\ell\in \Gamma_k^\sigma$, and also
$((p_i\to p_i)\to p_{k+1})^\sigma=(p_\ell\to p_\ell)\to \sigma(p_{k+1})\in \Gamma_k^\sigma$,
and so $\Gamma_k^\sigma\der_{R_\mathsf{mp}} \sigma(p_{k+1})$.
\end{remark}

Next we present a family of interesting examples of natural denumerable Nmatrices for very weak logics.

\begin{example}
Given a set of axioms $\Ax\subseteq Fm$ over an arbitrary signature $\Sigma$, let $R_\Ax=\{\frac{}{\varphi}:\varphi\in \Ax\}$ and  $\Ax^{\textsf{inst}}=\{\varphi^\sigma:\varphi\in \Ax,\sigma:P\to Fm\}$.
Consider the $\Sigma$-Nmatrix $\Mt_{\Ax}=\tuple{A_\Ax,\cdot_\Ax,D_\Ax}$ where 
{
($\cdot_{\Ax}=\cdot_{\Mt_{\Ax}}$ and)}:
\begin{align*}
 A_\Ax&=((Fm\setminus \Ax^{\textsf{inst}})\times\{0\})\cup (Fm\times\{1\}),\\
 D_\Ax&=Fm\times\{1\},\\
 \conn_\Ax((\varphi_1,i_1),\ldots,(\varphi_k,i_k))&=\{(\conn(\varphi_1,\ldots,\varphi_k),i):i=0,1\}\cap A_\Ax.
\end{align*}
for each $k<\omega$, $\conn\in\Sigma_k$, $i_1,\dots,i_k\in\{0,1\}$, and $\varphi_1,\dots,\varphi_k\in Fm$.

It was shown in~\cite{OnAxRexp,newfibring} that $\der_{R_\Ax}{=}\der_{\Mt_{\Ax}}$, and thus $\vdash_{R_\Ax}{=}\vdash_{\Mt_{\Ax}}$. 
  \hfill$\triangle$
\end{example}
 
By allowing both non-determinism and partiality, one obtains
the even more general notion of a \emph{partial non-deterministic matrix (PNmatrix)}~\cite{Baaz2013}\index{partial non-deterministic matrix (PNmatrix)}:
in this case the underlying multialgebra $\bA$ of a given matrix may be partial in that
$\conn_\bA:A^k\to \wp(A)$
 can yield an empty set for some $\conn\in \Sigma^{k}$.
Partiality is beyond the scope of this paper, but it is worth noting that in this more general setting  we can obtain semantics based on a single denumerable PNmatrix for a 
wider range of logics, including intuitionistic logic and every modal logic~\cite{OnAxRexp}.

\section{Constructions on Nmatrices}\label{sec:nmatrices}

In this section we show that many of the usual constructions on standard logical matrices can be extended to 
Nmatrices, and establish some basic properties about them; we will  highlight  the main differences between
the standard setting and ours in the appropriate places.

\subsection{Strict homomorphisms, preimages and quotients}
 
We start by introducing a  notion of homomorphism of Nmatrices that extends the usual one for logical matrices. We then explore its properties, and discuss the similarities and differences with respect to the deterministic setting.

\begin{definition}\label{def:6}
Let $\Mt_1= \tuple{A_1,\cdot_1,D_1}$ and
$\Mt_2= \tuple{A_2,\cdot_2,D_2}$ be $\Sigma$-Nmatrices.
A \emph{homomorphism}\index{Nmatrix homomorphism} $h:\Mt_1\to\Mt_2$ is a function $h:A_1\to A_2$ 
such that $h$ is a multialgebra homomorphism between $\tuple{A_1,\cdot_1}$ and $\tuple{A_2,\cdot_2}$ and $h(D_1)\subseteq D_2$.

If $h^{-1}(D_2)=D_1$ we say $h$ is \emph{strict}\index{strict homomorphism}, and if it is also injective we say that it is an \emph{embedding}.
When $h$ is a surjective function we say $h$ is \emph{onto}.
\end{definition}

A strict homomorphism $h:\Mt_1\to\Mt_2$ is an \emph{isomorphism} whenever it is a bijective function and
$h(\conn_{\Mt_1}(x_1,\ldots,x_k))=\conn_{\Mt_2}(h(x_1),\ldots,h(x_k))$ for every $k<\omega$, $x_1,\ldots,x_k\in A_1^k$
and $\conn\in \Sigma^{k}$.

The \emph{image} of an Nmatrix  $\Mt_1$ under a
strict homomorphism $h:\Mt_1\to \Mt_2$ is the Nmatrix $h[\Mt_1]=\tuple{h[A_1],\cdot_{h},D_2\cap h[A_1]}$
where $h[A_1]=\{h(a):a\in A_1\}$ 
{
(as before, we simplify $\cdot_{h}=\cdot_{{h[\Mt_1]}}$)} and
 \begin{align*}
 \conn_{h}(x_1,\ldots,x_k)&=\{h(y): y\in \conn_1(y_1,\ldots,y_k),h(y_i)=x_i,1\leq i\leq k\}.
 \end{align*}
{
As usual, we say that  $\Mt_1$ is the \emph{preimage}\index{preimage}  of $h[A_1]$ under $h:\Mt_1\to\Mt_2$. Preimages under strict homomorphisms are also called \emph{rexpansions}\index{rexpansions}  in~\cite{rexpansions}. 
 }

Given $\Sigma$-Nmatrices $\Mt_1= \tuple{A_1,\cdot_1,D_1}$ and
$\Mt_2= \tuple{A_2,\cdot_2,D_2}$,
we say that $\Mt_1$ is a \emph{subNmatrix}\index{subNmatrix} of  $\Mt_2$
whenever $A_1\subseteq A_2$ and for every $\conn\in \Sigma^{k}$ and
$x_1,\dots,x_k\in A_1$, $\conn_1(x_1,\ldots,x_k)\subseteq\conn_2(x_1,\ldots,x_k)$.
That is, the function $h:A_1\to A_2$ defined as $h(x)=x$ induces an embedding $h:\Mt_1\to\Mt_2$.
Furthermore, whenever for every $\conn\in \Sigma^{k}$ and
$x_1,\dots,x_k\in A_1$, $\conn_1(x_1,\ldots,x_k)=\conn_2(x_1,\ldots,x_k)$,
  we say that $\Mt_1$ is the \emph{restriction}\index{restriction} of $\Mt_2$ to $A_1$.

For every strict homomorphism $h:\Mt_1\to\Mt_2$,
 $h[\Mt_1]$ is a subNmatrix of $\Mt_2$, and when
  $h[\Mt_1]=\Mt_2$ we say that $h$ is a \emph{covering} homomorphism \index{covering homomorphism} (which implies that $h$ is also onto).

 \begin{definition}
 Given a class $\cM$ of Nmatrices,  we denote by 
$\SubN(\cM)$ 
the class of isomorphic copies of subNmatrices  of Nmatrices in $\cM$,
and by 
 $\Rexp(\cM)$ 
the class of isomorphic copies of preimages by strict homomorphisms of Nmatrices in $\cM$. 
\end{definition}
{
It follows from the definitions that there is a strict homomorphism from $\Mt_1$ to $\Mt_2$ if and only if $\Mt_1\in \Rexp(\SubN(\Mt_2))$, and there is a covering strict homomorphism if and only if $\Mt_1\in \Rexp(\Mt_2)$.}

\paragraph*{Lindenbaum matrices.} Lindenbaum matrices can  be seen as preimages of valuations also in the realm of Nmatrices.

\begin{definition}\label{def:LindMatr}
The set of \emph{Lindenbaum matrices} of a multiple-conclusion logic \index{Lindenbaum matrices of a multiple-conclusion logic}  $\der$ is 
 $$\cL_\der =\{\tuple{Fm,\cdot_\Fm,\Gamma}:\Gamma\not\der Fm\setminus \Gamma\}.$$ 

The Lindenbaum matrix \index{Lindenbaum matrix induced by a valuation} induced by a valuation $v$ over $\Mt$, 
is $\mathbb{L}_v=\tuple{Fm,\cdot_\Fm,D_v}$ with $D_v=v^{-1}(D)$.

\end{definition}

%

The next lemma reflects the fact that a valuation on an Nmatrix 
induces a strict homomorphism whose image sits inside a  denumerable subNmatrix.

\begin{lemma}\label{valrexp}
Let $\Mt$ be a $\Sigma$-Nmatrix, $\cM$ a class of $\Sigma$-Nmatrices, and $\der$ a logic.

If $\Mt\in\Nmatr(\der)$ and $v\in\Val(\Mt)$ then $\mathbb{L}_v\in \cL_\der$.

Further, $\cL_{\der_\cM}=\{\mathbb{L}_v:v\in \Val(\Mt),\Mt\in \cM\}\subseteq {\Rexp\SubN(\cM)}$. 
\end{lemma}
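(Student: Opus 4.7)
The plan is to unpack the three claims in turn, each of which ultimately boils down to reading off the definitions carefully.

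\textbf{Step 1 (first claim).} Assume $\Mt=\tuple{A,\cdot_\Mt,D}\in\Nmatr(\der)$ and fix $v\in\Val(\Mt)$. I need to show $D_v\not\der\, Fm\setminus D_v$, where $D_v=v^{-1}(D)$. Since $\Mt$ is $\der$-sound, it suffices to show that $D_v\not\der_\Mt Fm\setminus D_v$, for which I simply exhibit $v$ itself as a refuting valuation: by construction $v[D_v]=v[v^{-1}(D)]\subseteq D$, while $v[Fm\setminus D_v]\cap D=\emptyset$ (if $\varphi\notin v^{-1}(D)$ then $v(\varphi)\notin D$). Hence $\mathbb{L}_v\in\cL_\der$.

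\textbf{Step 2 (equality of the two sets).} For the inclusion $\supseteq$, every $\Mt\in\cM$ is trivially $\der_\cM$-sound, so Step~1 applied to $\der_\cM$ yields $\mathbb{L}_v\in\cL_{\der_\cM}$ for any $v\in\Val(\Mt)$ with $\Mt\in\cM$. For the inclusion $\subseteq$, take $\tuple{Fm,\cdot_\Fm,\Gamma}\in\cL_{\der_\cM}$, so $\Gamma\not\der_\cM Fm\setminus\Gamma$. By definition of $\der_\cM=\bigcap_{\Mt\in\cM}\der_\Mt$, there exist $\Mt\in\cM$ and $v\in\Val(\Mt)$ with $v[\Gamma]\subseteq D$ and $v[Fm\setminus\Gamma]\cap D=\emptyset$. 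These two conditions jointly force $\Gamma=v^{-1}(D)=D_v$, so $\tuple{Fm,\cdot_\Fm,\Gamma}=\mathbb{L}_v$ lies in the right-hand set.

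\textbf{Step 3 (inclusion in $\Rexp\SubN(\cM)$).} Given $v\in\Val(\Mt)$ with $\Mt\in\cM$, I observe that $v$, as an element of $\mathsf{hom}(\Fm,\bA_\Mt)$, is a multialgebra homomorphism from $\Fm$ to $\bA_\Mt$; moreover, directly from $D_v=v^{-1}(D)$, $v$ is \emph{strict} as a map $v:\mathbb{L}_v\to\Mt$. Therefore $\mathbb{L}_v$ is (literally) the preimage, under the strict homomorphism $v$, of its image $v[\mathbb{L}_v]$, which in turn is a subNmatrix of $\Mt\in\cM$. Thus $\mathbb{L}_v\in\Rexp\SubN(\cM)$, completing the proof.

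The argument is essentially definition-chasing; the only point requiring a moment of care is the witness construction in Step~2, where one must notice that the two clauses $v[\Gamma]\subseteq D$ and $v[Fm\setminus\Gamma]\cap D=\emptyset$ \emph{together} pin down $\Gamma$ exactly as $v^{-1}(D)$, rather than merely as a subset. No subtlety beyond this is expected.
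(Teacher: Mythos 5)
Your proof is correct and follows essentially the same route as the paper's: the first claim by exhibiting $v$ itself as the refuting valuation, the set equality by observing that a witnessing valuation for $\Gamma\not\der_\cM Fm\setminus\Gamma$ forces $\Gamma=v^{-1}(D)$, and the final inclusion by noting that $v:\mathbb{L}_v\to\Mt$ is a strict homomorphism whose image is a subNmatrix of $\Mt$. You merely spell out some steps (the $\supseteq$ inclusion and the exact pinning-down of $\Gamma$) that the paper leaves implicit.
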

\begin{proof}
It is straightforward that if $\Mt\in\Nmatr(\der)$ and $v\in\Val(\Mt)$ then we have
$D_v\not\der(Fm\setminus D_v)$, and thus $\mathbb{L}_v\in \cL_\der$.

Additionally, if $\Gamma\not\der_{\cM} (Fm\setminus \Gamma)$ then there is $\Mt=\tuple{A,\cdot_{\Mt} ,D}\in \cM$ and
$v\in \Val(\Mt)$ such that
$v^{-1}(D)=\Gamma$. 
 Obviously, 
 $v:\mathbb{L}_v\to\Mt$ is a strict homomorphism {and $h[\mathbb{L}_v]\in \SubN(\Mt)$}.
\end{proof}

\paragraph*{Images of onto strict homomorphisms can induce a weaker logic.} It is well known that, if $\Mt_1,\Mt_2$ are matrices and $h:\Mt_1\to \Mt_2$ is a strict homomorphism, then $\vdash_{\Mt_2}{\subseteq}\vdash_{\Mt_1}$ and, if $h$ is also onto, then $\vdash_{\Mt_2}{\subseteq}\vdash_{\Mt_1}$ (see, for instance,~\cite{Fo16}). 
These properties still hold, for similar reasons, if one considers multiple-conclusion logics. However, while strict homomorphisms of Nmatrices do guarantee that the logic of $\Mt_2$ is weaker than that of $\Mt_1$, surjectivity does not force equality anymore.

\begin{lemma}\label{lem:rexpstronger}
If $h:\Mt_1\to \Mt_2$ is a strict homomorphism between $\Sigma$-Nmatrices $\Mt_1$ and $\Mt_2$, then $\der_{\Mt_2}{\subseteq}\der_{\Mt_1}$.
{ 
In other words, if $\Mt_1\in \Rexp(\SubN(\Mt_2))$ then $\der_{\Mt_2}{\subseteq}\der_{\Mt_1}$.
}
\end{lemma}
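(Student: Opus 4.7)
The plan is to show containment by contrapositive, i.e.\ by taking any valuation $v$ on $\Mt_1$ witnessing $\Gamma\not\der_{\Mt_1}\Delta$ and producing a valuation on $\Mt_2$ witnessing $\Gamma\not\der_{\Mt_2}\Delta$, via composition with the strict homomorphism $h$. More precisely, I would assume $\Gamma\der_{\Mt_2}\Delta$ and let $v\in\Val(\Mt_1)$ satisfy $v[\Gamma]\subseteq D_1$, aiming to conclude $v[\Delta]\cap D_1\neq\emptyset$.

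The first step is to verify that $h\circ v:Fm\to A_2$ is a valuation on $\Mt_2$. This is a direct application of the definition of a multialgebra homomorphism: for each $\conn\in\Sigma^k$ and $\varphi_1,\dots,\varphi_k\in Fm$, the inclusion $v(\conn(\varphi_1,\dots,\varphi_k))\in\conn_1(v(\varphi_1),\dots,v(\varphi_k))$ yields
\[h(v(\conn(\varphi_1,\dots,\varphi_k)))\in h[\conn_1(v(\varphi_1),\dots,v(\varphi_k))]\subseteq\conn_2(h(v(\varphi_1)),\dots,h(v(\varphi_k))),\]
so $h\circ v\in\Val(\Mt_2)$.

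The second step exploits that $h$ is a homomorphism of Nmatrices to get $(h\circ v)[\Gamma]=h[v[\Gamma]]\subseteq h[D_1]\subseteq D_2$. Hence $\Gamma\der_{\Mt_2}\Delta$ forces some $\psi\in\Delta$ with $h(v(\psi))\in D_2$. Here the \emph{strictness} of $h$ enters crucially: the condition $h^{-1}(D_2)=D_1$ lets us pull back designation, yielding $v(\psi)\in D_1$ and so $v[\Delta]\cap D_1\neq\emptyset$, as required.

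The main (and only) subtle point is the role of strictness: mere homomorphy would give $h[D_1]\subseteq D_2$ but not the converse inclusion $h^{-1}(D_2)\subseteq D_1$, and without the latter one could not transfer the designated witness on $\Mt_2$ back to a designated witness on $\Mt_1$. The concluding remark in parentheses is then immediate from the observation already made in the text, since $\Mt_1\in\Rexp(\SubN(\Mt_2))$ means precisely that $\Mt_1$ arises as the preimage under some strict homomorphism into a subNmatrix $\Mt'\subseteq\Mt_2$, and subNmatrices satisfy $\der_{\Mt_2}\subseteq\der_{\Mt'}$ trivially because every valuation on $\Mt'$ is also a valuation on $\Mt_2$ with the same designated set.
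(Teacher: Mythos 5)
Your proof is correct and is essentially the paper's own argument: the paper's one-line proof likewise observes that composing a valuation of $\Mt_1$ with the strict homomorphism $h$ yields a valuation of $\Mt_2$ designating exactly the same formulas, which is precisely the composition-plus-strictness mechanism you spell out. Your version merely makes explicit the two inclusions ($h[\conn_1(\dots)]\subseteq\conn_2(\dots)$ and $h^{-1}(D_2)=D_1$) that the paper leaves implicit.
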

\begin{proof}
The result follows easily by noting that if $v\in\Val(\Mt_2)$
 then $h\circ v \in\Val(\Mt_1)$ designates exactly the same formulas as $v$.  
\end{proof}

\begin{example}\label{ex:hom}
Recall the examples in Section~\ref{sec:noprops}, and consider $h:A_{\omega,\omega}\to A_{1,1}$
given by $$h(x)=\begin{cases}
    \bot_0&\text{ if }x\in U_n\\
    \top_0&\text{ if }x\in D_m.
\end{cases}$$

We know that $\der_{R_\mathsf{u}}{=}\der_{\mathbb{U}_{1,1}}{\subsetneq} \der_{\mathbb{D}_{\omega,\omega}}{=}\der_{R_\mathsf{id}}$. Still, note that $h$ constitutes an onto (and covering) strict homomorphism $h:\mathbb{D}_{\omega,\omega}\to\mathbb{U}_{1,1}$.
  \hfill$\triangle$
\end{example}

Knowing that this basic fact about homomorphisms of matrices fails for Nmatrices, it is  worth mentioning that no suitably adapted stronger notion seems to play the same role.
Note, for instance, that the homomorphism given in the example is covering.
Since images of strict homomorphisms (or quotients) play a fundamental role in matrix semantics and  the theory of algebraic logic, we will need to take extra care in dealing with Nmatrices.

\paragraph*{Quotients of Nmatrices.} Strict homomorphisms between matrices  are associated
 with 
quotients by congruences compatible with the set of designated truth values. In the setting of Nmatrices there appears to be no reasonable notion of congruence playing the same role. Indeed, quotients of Nmatrices may be obtained through any 
equivalence relation on its set of truth values.

\begin{definition}\label{quo1}
 Let $\Mt=\tuple{A,\cdot_\Mt,D}$ be a $\Sigma$-Nmatrix, and $\equiv$ an equivalence relation on $A$. The \emph{quotient of $\Mt$ by $\equiv$}\index{quotient of Nmatrices} is the Nmatrix ${\Mt_{/\!\equiv}}=\tuple{A_{/\!\equiv},\cdot_\equiv,D_{/\!\equiv}}$ where, for each $k<w$ and $\conn\in\Sigma^k$:
\begin{align*}
    \conn_\equiv([x_1]_\equiv,\ldots,[x_k]_\equiv)&=
\{[y]_\equiv:y\in \conn_\Mt(y_1,\ldots,y_k),y_i\in [x_i]_\equiv,1\leq i\leq k\}.
\end{align*}
\end{definition}

Quotients are of course more interesting if the equivalence relation $\equiv$ is \emph{compatible}\index{compatible equivalence relation} (with $D$), that is, $x\equiv y$ implies that
 $x\in D$ if and only if $y\in D$. When this is the case, we say that ${\Mt_{/\!\equiv}}$ is a 
 \emph{compatible quotient}. 
 
 As with (deterministic) matrices, we have that compatible quotients of Nmatrices are isomorphic to images by
  strict homomorphisms. 

Given a strict homomorphism $h:\Mt_1\to \Mt_2$ between Nmatrices $\Mt_1=\tuple{A_1,\cdot_1,D_1}$ and 
$\Mt_2=\tuple{A_2,\cdot_2,D_2}$, let
 $\equiv_h{\subseteq} A_1\times A_1$ be the kernel equivalence relation 
defined as $x\equiv_h y$ iff $h(x)=h(y)$. Clearly, $\equiv_h$ is compatible (with $D_1$).
The following result is a straightforward consequence of the definitions.

\begin{lemma}\label{quo2}
Let $\Mt=\tuple{A,\cdot_\Mt,D}$ be a $\Sigma$-Nmatrix,  
$\equiv$ an equivalence relation on $A$.

 The natural function $h:A\to A_{/\equiv}$ given by $h(a)=[a]_\equiv$ constitutes a covering homomorphism $h:\Mt\to\Mt_{/\equiv}$. Additionally, $h:\Mt\to\Mt_{/\equiv}$ is strict if and only if $\equiv$ is compatible.

Further,
if $h:\Mt_1\to \Mt_2$ is a strict homomorphism of $\Sigma$-Nmatrices then 
$h[\Mt_1]$ is isomorphic to $\Mt_{1/\!\equiv_h}$.
\end{lemma}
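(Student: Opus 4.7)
The plan is to verify both claims by unpacking the definitions; the argument is largely routine, so the main task is to structure it cleanly and to identify the one spot (the strictness characterization) where the compatibility hypothesis is actually used.

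For the first claim, I would first check that $h(a)=[a]_\equiv$ is a multialgebra homomorphism $\bA_\Mt\to \bA_{\Mt_{/\!\equiv}}$. This follows directly from Definition~\ref{quo1}: if $y\in \conn_\Mt(a_1,\ldots,a_k)$, then taking $y_i:=a_i$ in the definition of $\conn_\equiv$ witnesses $[y]_\equiv\in \conn_\equiv([a_1]_\equiv,\ldots,[a_k]_\equiv)$, i.e.\ $h(\conn_\Mt(a_1,\ldots,a_k))\subseteq \conn_\equiv(h(a_1),\ldots,h(a_k))$. Surjectivity of $h$ is immediate (every class has a representative), and once surjectivity is in hand the computation of $\conn_h$ in the image reproduces precisely the definition of $\conn_\equiv$, so $h[\Mt]=\Mt_{/\equiv}$, establishing that $h$ is covering. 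For the \emph{iff}, observe that under the natural reading $D_{/\equiv}=\{[a]_\equiv:a\in D\}$ the set $h^{-1}(D_{/\equiv})$ is exactly $\{a\in A:\exists b\in D,\ a\equiv b\}$; this equals $D$ precisely when $\equiv$ does not relate designated elements to non-designated ones, which is the definition of compatibility. Hence $h$ is strict iff $\equiv$ is compatible, proving also $h(D)\subseteq D_{/\equiv}$ in both directions.

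For the second claim, given a strict homomorphism $h:\Mt_1\to\Mt_2$, I would define $\phi:\Mt_{1/\!\equiv_h}\to h[\Mt_1]$ by $\phi([a]_{\equiv_h})=h(a)$. Well-definedness and injectivity are the content of the kernel relation, and surjectivity onto $h[A_1]$ is by construction. Compatibility with designated sets follows from strictness: $h^{-1}(D_2)=D_1$ gives $h(D_1)=D_2\cap h[A_1]$, which says $\phi$ maps $D_{1/\equiv_h}$ bijectively onto the designated set of $h[\Mt_1]$. The key computation is then to show $\phi$ is an Nmatrix isomorphism in the strong sense, i.e.\ $\phi(\conn_{\equiv_h}([a_1]_{\equiv_h},\ldots,[a_k]_{\equiv_h}))=\conn_h(h(a_1),\ldots,h(a_k))$. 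Both sides unfold to $\{h(y):y\in\conn_1(y_1,\ldots,y_k),\ h(y_i)=h(a_i),\ 1\le i\le k\}$: on the left by Definition~\ref{quo1} together with the fact that $y_i\equiv_h a_i$ means $h(y_i)=h(a_i)$, and on the right directly by the definition of $\conn_h$ in the image.

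No step is genuinely hard; the only mildly delicate point is making sure the strictness hypothesis is invoked correctly in part (2) (it is needed to ensure the designated set of $h[\Mt_1]$ is captured exactly by $h(D_1)$ rather than merely containing it). Aside from that, the argument is a direct translation between the two equivalent descriptions of the multi-valued operation: via representatives (as in $\conn_\equiv$) and via the image of the underlying homomorphism (as in $\conn_h$).
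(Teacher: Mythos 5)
Your proof is correct and is exactly the routine unfolding of Definitions~\ref{def:6}, \ref{quo1} and of the image construction that the paper has in mind when it states the lemma as ``a straightforward consequence of the definitions'' (no proof is given there). In particular you correctly locate the two places where hypotheses genuinely matter: compatibility of $\equiv$ is equivalent to $h^{-1}(D_{/\equiv})=D$, and strictness of $h$ is what yields $h(D_1)=D_2\cap h[A_1]$ so that $\phi$ matches the designated sets exactly.
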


Despite the  above lemma, it may  happen that the image of an onto strict homomorphism does not cover the target Nmatrix; and, even when it does, it  may define a weaker logic than that of the original Nmatrix.
We illustrate this in the next Example.

\begin{example}\label{smashintobool} 
 Every 
 Nmatrix can be factored into an Nmatrix with at most two values
 by collapsing together all the designated elements on the one hand, and  all the non-designated ones on the other. However, such quotients may not be covering.

Recall the Nmatrices $\mathbb{U}_{1,1}$ and $\mathbb{D}_{n,m}$ from Examples~\ref{ex2val} and~\ref{noprops2}.
Consider the functions
$h_{n,m}:A_{n,m}\to A_{1,1}$, for
 $n,m\in \nats$, defined by 
$$h_{n,m}(x)=\begin{cases}
  \bot_0  & \text{ if }x\in U_n \\
  \top_0  &\text{ if }x\in D_m.
\end{cases}$$

It is obvious that $h_{n,m}:\mathbb{D}_{n,m}\to\mathbb{U}_{1,1}$ is always a strict homomorphism. Further, if $n,m>0$, then these homomorphisms are onto. 
However, they may fail to be covering. Take, for instance, $h_{1,2}$ and $h_{2,1}$. The corresponding images $h_{1,2}[\mathbb{D}_{1,2}]$ and $h_{2,1}[\mathbb{D}_{2,1}]$ 
correspond (up to isomorphism) to two different subNmatrices of $\mathbb{U}_{1,1}$ whose tables can be given, respectively, by
\begin{center}
    \begin{tabular}{c|c c }
    $\to$ & $0$ & $1$ \\
    \hline
    $0$  & $0,1$ & $0,1$ \\
     $1$   & $0,1$ &$1$
\end{tabular}
\qquad
\begin{tabular}{c|c c }
    $\to$ & $0$ & $1$ \\
    \hline
    $0$  & $1$ & $0,1$ \\
     $1$   & $0,1$ &$0,1$
\end{tabular}
\end{center}
where, on the left $1=[\top_0]_{\equiv_{h_{1,2}}}=[\top_1]_{\equiv_{h_{1,2}}}$ and $0=[\bot_0]_{\equiv_{h_{1,2}}}$, 
and on the right $1=[\top_0]_{\equiv_{h_{2,1}}}$ and $0=[\bot_0]_{\equiv_{h_{2,1}}}=[\bot_1]_{\equiv_{h_{2,1}}}$, $0$ corresponding to $\bot_0$ and $1$ to $\top_0$.

It is not difficult to see (for instance using the techniques in~\cite{wollic19,synt}) that we have
$\der_{R_\mathsf{u}}{=}\der_{\mathbb{U}_{1,1}}{\subsetneq} \der_{h_{1,2}[\mathbb{D}_{1,2}]}{=}\der_{R_{1,2}}$ with $R_{1,2}=\{\frac{p\,,\,q}{p\to q}\}$.

Also, we have $\der_{R_\mathsf{u}}{=}\der_{\mathbb{U}_{1,1}}{\subsetneq}\der_{h_{2,1}[\mathbb{D}_{2,1}]}{=}\der_{R_{2,1}}$ with $R_{2,1}=\{\frac{}{p\,,\,q\,,\,p\to q}\}$. Interestingly, in this case, one still has $\vdash_{\mathbb{U}_{1,1}}{=}\vdash_{h_{2,1}[\mathbb{D}_{2,1}]}$
{
as for any $\varphi\in Fm$,
$v_\varphi\in \Val(h_{2,1}[\mathbb{D}_{2,1}])$ where $$v_\varphi(\psi)=\begin{cases}
    0 &\text{if }\psi=\varphi\\
    1 &\text{otherwise }
\end{cases}$$ shows that  $v_\varphi^{-1}(1)=Fm\setminus \{\varphi\}\not\vdash_{h_{2,1}[\mathbb{D}_{2,1}]}\varphi$.} 

When $n,m>1$ we have that $h_{n,m}$ is covering, that is, $h_{n,m}[\mathbb{D}_{n,m}]$ is isomorphic 
to $\mathbb{U}_{1,1}$. Since $\der_{\mathbb{D}_{n,m}}p\to p$, we thus have $\der_{\mathbb{U}_{1,1}}{=}\der_{h_{n,m}[\mathbb{D}_{n,m}]}{\subsetneq}\der_{\mathbb{D}_{n,m}}$.  \hfill$\triangle$

\end{example}

The phenomena observed above 
suggest that it may not be possible 
to define a reasonable general notion of model reduction in the non-deterministic setting. However, we are going to see below that we can still adapt the usual constructions in order to characterize the class of Nmatrix models of a given logic.

\subsection{Sound homomorphisms and quotients} 

{

Example~\ref{smashintobool} shows that
images by strict homomorphisms (or compatible quotients) of Nmatrices 
  need not define 
the same logic, in contrast to what happens with ordinary matrices. 
In general, the image or quotient Nmatrix may define a  weaker logic than the original one. 
Therefore, when analyzing the semantics of a given ambient logic, we need to consider only those images which are  models of the logic at hand. 
{
This leads us to consider a
family of strict
homomorphisms that preserve the initial logic, that is,
whose image is still sound with respect to the initial logic. 
These
 will be sufficient}
 to recover (sound) countable Nmatrices as compatible quotients of (sound)
Lindenbaum matrices. We shall see this in the next Section.

}

\begin{definition}
 Given an ambient logic $\der$
 and a set of Nmatrices $\cM$, we denote by
 $\Quo(\cM)$ the class of isomorphic copies of $\der$-sound compatible quotients of Nmatrices in $\Mt$. That is, $\Quo(\cM)$ contains the elements of $\Nmatr(\der)$ that are images 
 of Nmatrices in $\cM$ by strict homomorphisms.
\end{definition}

The $\Quo$ operator may seem artificial, but one of the key points of the present paper is precisely to highlight the additional difficulties posed by Nmatrices, in particular the
problem of determining when the 
{
logic induced by the} image of an Nmatrix by a strict homomorphism is weaker than the ambient logic. We do not know if a  `local' characterization
of sound morphisms/quotients, analogous to congruences in logical matrices,
 is possible
at all in our more general context.

\paragraph*{Troubles with model reduction.}  

A crucial property within the theory of logical matrices is that the  congruences compatible with the designated elements of an ordinary matrix form a complete sublattice of the lattice of all congruences
of the underlying algebra~\cite[Thm.~4.20]{Fo16}. This entails that there always is a maximal element among the compatible congruences, called the \emph{Leibniz congruence}, which allows for a neat notion of reduced model~\cite[Def.~4.21]{Fo16}.

The above does not hold
in the non-deterministic setting, for we need to consider  quotients which are sound with respect to some ambient logic; and even in such a scenario, the corresponding operation of model reduction may yield multiple ``reduced models'', as we illustrate below.

\begin{example}\label{exquo}
 
Recall the examples in Subsection~\ref{sec:noprops},
and
pick $\Mt=\tuple{A_{1,3},\cdot_\Mt,D_3}$ as the subNmatrix (or refinement, in the sense of~\cite{rexpansions}) of  $\mathbb{D}_{1,3}$ where the following four entries of the corresponding truth-tables are redefined in order to contain only designated elements:
$$\to_\Mt(\top_0,\top_1)\,{=}\to_\Mt(\top_1,\top_0)\,{=}\to_\Mt(\top_1,\top_2)\,{=}\to_\Mt(\top_2,\top_1)\,{=}\,D_3.$$

It is straightforward that $\der_{R_{\mathsf{id}}}{\subseteq}\der_{\Mt}$, i.e., $\Mt$ is $\der_{R_{\mathsf{id}}}$-sound. Let $\equiv_{01}$ be the equivalence relation generated by 
identifying $\top_0{=}\top_1$, $\equiv_{12}$ be the equivalence relation generated by identifying $\top_1{=}\top_2$, and 
$\equiv$ be the equivalence relation obtained by identifying the three of them, i.e., $\top_0{=}\top_1{=}\top_2$. All are clearly compatible.

Clearly, both $\Mt_{/\equiv_{01}}$ and $\Mt_{/\equiv_{12}}$ are isomorphic to $\mathbb{D}_{1,2}$, thus the quotients are $\der_{R_{\mathsf{id}}}$-sound.

However, there is no equivalence relation extending 
$\equiv_{01}$ and $\equiv_{12}$ that is compatible with $D_3$ and also  $\der_{R_{\mathsf{id}}}$-sound.
The only possible candidate is $\equiv$, but  
$\Mt_{/\equiv}$ induces
a strictly weaker logic than $\der_{R_{\mathsf{id}}}$. In particular, we have
$\not\der_{\Mt_{/\equiv}}p\to p$ 
since $[\bot_0]_{\equiv}\,{\in} \to_{\Mt/\equiv}([\top_0]_\equiv,[\top_0]_\equiv)$.
In summary, $\Mt_\equiv$ is not $\der_{R_{\mathsf{id}}}$-sound.
\hfill $\triangle$
\end{example}

\paragraph*{Countable Nmatrices are still quotients of Lindenbaum matrices.} Despite the differences stressed above, we can still collect all sound compatible quotients of Nmatrix models 
-- instead of simply considering reduced models, as one does with ordinary matrices -- 
and obtain interesting characterizations. Recall the set $\cL_\der$ of all Lindenbaum matrices of a logic  introduced in Definition~\ref{def:LindMatr}.\smallskip

{
For a given $\Sigma$-Nmatrix $\Mt=\tuple{A,\cdot_\Mt,D}$, let   
$\mathsf{Equiv}(\Mt)$ denote the set of all compatible
equivalence relations over $A$.

\begin{definition}
Given a logic $\der$, define the class of all $\der$-sound compatible quotients of Lindenbaum matrices as
$\cL^{\mathsf{quo}}_\der=\{\Mt_{/\equiv}:\Mt\in \cL_\der,\equiv\, \in \mathsf{Equiv}(\Mt),\der{\subseteq} \der_{\Mt_{/\equiv}}\}$.  
\end{definition}

A fundamental fact is that every countable $\der$-sound Nmatrix can be obtained as 
a $\der$-sound compatible quotient of a Lindenbaum matrix. 
Given a valuation $v\in\Val(\Mt)$ over a $\Sigma$-Nmatrix $\Mt$,  
let the equivalence relation $\equiv_v{\subseteq}\, Fm\times Fm$ be the 
kernel 
given by $\varphi\equiv_v\psi$ whenever $v(\varphi)=v(\psi)$,
and set $D_v=v^{-1}(D)$.
 
\begin{proposition}\label{breakintoden}
For every countable 
 $\Sigma$-Nmatrix 
 $\Mt\in \Nmatr(\der)$
 there is $v\in \Val(\Mt)$ such that 
  $(\mathbb{L}_v)_{/\equiv_v}$ is
 isomorphic to $\Mt$.
 
 As $\mathbb{L}_v\in  \cL_\der$ and $\equiv_v$ is compatible, we have $\Mt\in \Quo(\cL_\der)$.
\end{proposition}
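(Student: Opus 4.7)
The plan is to construct a single valuation $v\in\Val(\Mt)$ that is both \emph{surjective} onto $A$ and \emph{realizing}, in the sense that every $b\in\conn_\Mt(a_1,\ldots,a_k)$ arises as $v(\conn(\psi_1,\ldots,\psi_k))$ for suitable formulas $\psi_i$ with $v(\psi_i)=a_i$. Once such a $v$ is at hand, the map $\phi\colon Fm_{/\equiv_v}\to A$ given by $\phi([\varphi]_{\equiv_v})=v(\varphi)$ is well defined and bijective, and the two properties above are exactly what is needed to turn it into an Nmatrix isomorphism between $(\mathbb{L}_v)_{/\equiv_v}$ and $\Mt$: the valuation axiom yields the inclusion $\phi(\conn_{\equiv_v}(\cdots))\subseteq\conn_\Mt(\phi(\cdots))$, while the realizing property gives the reverse inclusion. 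Since $\equiv_v$ is automatically compatible with $D_v=v^{-1}(D)$, $\phi$ also maps designated classes bijectively onto $D$; and $\mathbb{L}_v\in\cL_\der$ follows at once from Lemma~\ref{valrexp}, whence $(\mathbb{L}_v)_{/\equiv_v}\in\Quo(\cL_\der)$.

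To produce such a $v$, I would first use that both $A$ and $\Sigma$ are countable to enumerate the countable set of triples $T=\{(\conn,(a_1,\ldots,a_k),b)\,:\,\conn\in\Sigma^k,\ b\in\conn_\Mt(a_1,\ldots,a_k)\}$ as $\{t_n\}_{n<\omega}$ with $t_n=(\conn_n,(a_1^n,\ldots,a_{k_n}^n),b^n)$. Using that $P$ is denumerable, I would then reserve, for each $n$, a tuple $p_1^n,\ldots,p_{k_n}^n\in P$ of variables all pairwise distinct across the whole enumeration, setting $v(p_i^n)=a_i^n$; the remaining variables can be mapped onto $A$ arbitrarily so as to ensure that every element of $A$ is already attained on $P$. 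Next I would extend $v$ to $Fm$ by recursion on $\mathsf{depth}$: on the distinguished depth-$1$ formulas $\conn_n(p_1^n,\ldots,p_{k_n}^n)$ I stipulate $v(\conn_n(p_1^n,\ldots,p_{k_n}^n))=b^n$, which is legal because $b^n\in(\conn_n)_\Mt(a_1^n,\ldots,a_{k_n}^n)$, and on every other compound formula $\conn(\varphi_1,\ldots,\varphi_k)$ I pick any element of the non-empty set $\conn_\Mt(v(\varphi_1),\ldots,v(\varphi_k))$. Since distinct triples use disjoint tuples of fresh variables, the depth-$1$ stipulations never collide, so $v$ is well defined and, by construction, a valuation with the realizing property.

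The main obstacle I expect is precisely the bookkeeping in the first stage: one must allocate fresh variables to every triple in $T$ without exhausting $P$, and the induced depth-$1$ stipulations must be mutually consistent. Both hinge on the hypothesis that $\Mt$ is countable (and that $\Sigma$ is countable by standing convention), which keeps $T$ countable and leaves a denumerable supply of fresh variables available; dropping countability of $\Mt$ would break this step, which is exactly why the statement is restricted to countable Nmatrices. Everything else---well-definedness and bijectivity of $\phi$, preservation of the non-deterministic operations and of designated elements, compatibility of $\equiv_v$, and soundness of $\mathbb{L}_v$---is a routine unraveling of the definitions once $v$ has the realizing property.
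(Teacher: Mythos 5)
Your proposal is correct and follows essentially the same route as the paper: enumerate all triples $(\conn,(a_1,\ldots,a_k),b)$ with $b\in\conn_\Mt(a_1,\ldots,a_k)$, allocate fresh pairwise-distinct variables to each triple (plus variables hitting every element of $A$), define the resulting prevaluation and extend it to a valuation $v$ that is onto and covering, whence $(\mathbb{L}_v)_{/\equiv_v}\cong\Mt$. The only cosmetic difference is that you carry out the extension by recursion on depth where the paper invokes the standard fact that prevaluations on subformula-closed sets extend to valuations.
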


\begin{proof}
Let $\Mt=\tuple{A,\cdot_\Mt,D}$ be a $\der$-sound countable Nmatrix. For each $k<\omega$ define
$$B_k=\{(\conn,x_1,\dots,x_k,y):\conn\in\Sigma^k,x_1,\dots,x_k\in A,y\in\conn_\Mt(x_1,\dots,x_k)\}\textrm{, and}$$
$$\widetilde{B}=A\cup\bigcup\limits_{k<\omega}\{(b,i):b\in B_k,1\leq i\leq k\}.$$

It is clear that $\widetilde{B}$ is countable since $\Sigma$ and $A$ are countable, so we can consider an injective function $e:\widetilde{B}\to \nats$. 
For each $x\in A$, let $\varphi_x$ denote the propositional variable $p_{e(x)}$.
For each $k<\omega$ and $b=(\conn,x_1,\dots,x_k,y)\in B_k$, let $\varphi_b$ denote the formula
$\conn(p_{e(b,1)},\dots,p_{e(b,k)})$. Finally, define
$$\Gamma=\sub(\{\varphi_x:x\in A\}\cup\bigcup\limits_{k<\omega}\{\varphi_b:b\in B_k\})$$
and consider the function $w:\Gamma\to A$ such that $w(\varphi_x)=x$ for each $x\in A$, and also $w(p_{e(b,i)})=x_i$ and $w(\varphi_b)=y$ for each $k<w$, 
$b=(\conn,x_1,\dots,x_k,y)\in B_k$ and $1\leq i\leq k$. It is clear that $w$ is a prevaluation and thus that it can be extended to a valuation $v\in\Val(\Mt)$.

Clearly, $v:\mathbb{L}_v\to\Mt$ is a strict homomorphism. Further, $v$ is onto  because $v(\varphi_x)=x$ for each $x\in A$. To see that $v$ is covering note that if $y\in\conn_\Mt(x_1,\dots,x_k)$ then one can consider $b=(\conn,x_1,\dots,x_k,y)\in B_k$ and observe that $v(p_{e(b,i)})=x_i$ for $1\leq i\leq k$, and that $v(\varphi_b)=v(\conn(p_{e(b,1)},\dots,p_{e(b,k)}))=y$. 

We conclude that ${\mathbb{L}_v}_{/\equiv_v}$ is
 isomorphic to $\Mt$.
\end{proof}

\paragraph*{Comparing logics given by denumerable Nmatrices.} Although the problem of comparing the logics defined by two Nmatrices seems to be inherently harder than in the case of logical matrices (as we discuss in the concluding section), we can still obtain the following characterization.

\begin{theorem}\label{th:countablereflect}
Given  two countable $\Sigma$-Nmatrices $\Mt_1$ and $\Mt_2$,  we have $\der_{\Mt_1}{\subseteq} \der_{\Mt_2}$ if and only if
  $\Mt_2$ is a $\der_{\Mt_1}$-sound quotient of a preimage of $\Mt_1$.
Hence, $\der_{\Mt_1}{=} \der_{\Mt_2}$ if and only if  $\Mt_1\in \QuoMd({\Rexp(\SubN}(\Mt_2)))$
and $\Mt_2\in \QuoMu({\Rexp(\SubN}(\Mt_1)))$.
\end{theorem}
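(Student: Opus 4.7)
The plan is to prove the one-sided statement $\der_{\Mt_1}\subseteq\der_{\Mt_2}$ iff $\Mt_2\in\QuoMu(\Rexp(\SubN(\Mt_1)))$ as the workhorse, and then derive the ``hence'' clause by applying it twice with the roles of $\Mt_1$ and $\Mt_2$ swapped. The nontrivial direction will essentially recycle Proposition~\ref{breakintoden} together with Lemma~\ref{valrexp}; the converse direction and the final conjunction are just unpacking of definitions.

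For the forward implication I would start from the hypothesis $\der_{\Mt_1}\subseteq\der_{\Mt_2}$, which is the same as saying $\Mt_2\in\Nmatr(\der_{\Mt_1})$. Because $\Mt_2$ is countable, Proposition~\ref{breakintoden} supplies a valuation $v\in\Val(\Mt_2)$ such that $\Mt_2$ is isomorphic to $(\mathbb{L}_v)_{/\equiv_v}$, with $\mathbb{L}_v\in\cL_{\der_{\Mt_1}}$ and with $\equiv_v$ automatically compatible with $D_v=v^{-1}(D)$ (if $v(\varphi)=v(\psi)$, then $\varphi\in D_v$ iff $\psi\in D_v$). Lemma~\ref{valrexp} then places $\mathbb{L}_v$ inside $\Rexp(\SubN(\Mt_1))$, since every Lindenbaum matrix of $\der_{\Mt_1}$ arises from a valuation over $\Mt_1$ via a strict homomorphism whose image is a subNmatrix of $\Mt_1$. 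The assumed soundness of $\Mt_2$ with respect to $\der_{\Mt_1}$ is exactly what is needed to conclude $\Mt_2\in\QuoMu(\Rexp(\SubN(\Mt_1)))$.

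The reverse direction is immediate by unfolding definitions: membership in $\QuoMu(\cdot)$ encodes $\der_{\Mt_1}$-soundness of the quotient, so $\Mt_2\in\QuoMu(\Rexp(\SubN(\Mt_1)))$ yields $\der_{\Mt_1}\subseteq\der_{\Mt_2}$ at once. The equivalence characterising $\der_{\Mt_1}=\der_{\Mt_2}$ then follows by instantiating the one-sided statement at $(\Mt_1,\Mt_2)$ and at $(\Mt_2,\Mt_1)$ and conjoining.

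I do not anticipate a genuine obstacle: Proposition~\ref{breakintoden} has already done the heavy lifting, and the only subtlety is the trivial verification that the kernel equivalence of a valuation is compatible with the preimage of the designated set. The countability hypothesis on $\Mt_2$ is precisely what licenses the use of Proposition~\ref{breakintoden}, and it is visibly essential since a denumerable Lindenbaum matrix cannot hope to cover an uncountable Nmatrix; countability of $\Mt_1$ is not strictly needed for either direction but is natural for symmetry in the ``hence'' clause.
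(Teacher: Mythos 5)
Your proposal is correct and follows essentially the same route as the paper, whose proof simply cites Lemma~\ref{valrexp}, Lemma~\ref{lem:rexpstronger} and Proposition~\ref{breakintoden}: you use Proposition~\ref{breakintoden} to realise the countable sound Nmatrix $\Mt_2$ as a compatible quotient of a Lindenbaum matrix, Lemma~\ref{valrexp} to place that Lindenbaum matrix in $\Rexp(\SubN(\Mt_1))$, and the soundness built into $\QuoMu$ for the converse. Your side remarks (compatibility of the kernel $\equiv_v$, and that only the countability of the Nmatrix being represented is actually used in the one-sided direction) are also accurate.
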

\begin{proof}
    Immediate from Lemmas~\ref{valrexp} and \ref{lem:rexpstronger}, and Proposition~\ref{breakintoden}.    
\end{proof}

The following example illustrates the result.

\begin{example}\label{ex3val}

Recall the examples of Subsection~\ref{sec:noprops}. Let us consider a signature with a single unary connective $\neg$, and the Nmatrices 
 $\Mt_1=\tuple{A_{1,2},\cdot_1,D_2}$,  $\Mt_2=\tuple{A_{1,2},\cdot_2,D_2}$, 
 $\Mt_3=\tuple{A_{1,2},\cdot_3,D_2}$, $\Mt_4=\tuple{A_{1,3},\cdot_4,D_3}$
 where the different interpretations of $\neg$ are given by the following tables:
\begin{center}
    \begin{tabular}{c | c}
                & $\neg_1(x)$ \\
                \hline
                $\bot_0$ & $\top_0$ \\
                $\top_0$ & $\bot_0$ \\
                $\top_1$ & $\bot_0,\top_1$ 
                  \end{tabular}
                  \qquad
                   \begin{tabular}{c | c}
                & $\neg_2(x)$ \\
                \hline
              $\bot_0$ & $\top_0$ \\
                $\top_0$ & $\bot_0$ \\
                $\top_1$ & $\top_0,\top_1$ 
                  \end{tabular}
               \qquad    \begin{tabular}{c | c}
                & $\neg_3(x)$ \\
                \hline
               $\bot_0$ & $\top_0$ \\
                $\top_0$ & $\bot_0$ \\
                $\top_1$ & $\bot_0,\top_0,\top_1$ 
                  \end{tabular}
            \qquad             \begin{tabular}{c | c}
                & $\neg_4(x)$ \\
                \hline
               $\bot_0$ & $\top_0$ \\
                $\top_0$ & $\bot_0$ \\
                $\top_1$ & $\bot_0,\top_1$ \\
                $\top_2$ & $\top_0,\top_1$ 
            \end{tabular}
\end{center}

With $R=\{\frac{}{p\,,\,\neg p}, \frac{\neg\neg p}{p}\}$, it is easy to see that $\der_R{\subseteq} \der_{\Mt_i}$ for $1\leq i\leq 4$. Furthermore, as $\neg_2(\top_0)=\{\bot_0\}\subseteq U_1$ and 
$\neg_2(\top_1)=\{\top_0,\top_1\}\subseteq D_2$, $\Mt_2$ is \emph{monadic}\index{monadic Nmatrix} and we 
 obtain that $\der_{\Mt_2}{=}\der_R$ using the techniques in~\cite{wollic}.

We have that $\Mt_1$ is subNmatrix of $\Mt_3$ and $\Mt_4$, $\Mt_2$ is subNmatrix of $\Mt_3$, and thus $\Mt_1,\Mt_2\in  {\SubN}(\Mt_3)$ and $\Mt_1\in  {\SubN}(\Mt_4)$.
Further, $\Mt_3$ is isomorphic to the quotient of $\Mt_4$ obtained by identifying $\top_1=\top_2$. 
We obtain by Lemma~\ref{lem:rexpstronger} that 
 $\der_{\Mt_2}{=}\der_{R} {\subseteq} \der_{\Mt_3}{\subseteq}\der_{\Mt_4}{\subseteq} \der_{\Mt_1}$.

Let us further see that $\der_{\Mt_1}{\subseteq} \der_{\Mt_2}$, and thus conclude that all the Nmatrices $\Mt_i$ induce the same multiple-conclusion logic.
Using Theorem~\ref{th:countablereflect}, it is enough to show that
  $\Mt_1\in \QuoMd(\Rexp(\Mt_2))\subseteq \QuoMd(\Rexp(\SubN(\Mt_2)))$.
Consider $\Mt=\tuple{A,\cdot_\Mt,D}$
whose truth values are the sequences $xy$ with $x,y\in A_{1,2}$ and $x\in\neg_2(y)$ ($y$ is a look-behind of $x$ in $\Mt_2$),
i.e., $A=\{\bot_0\top_0,\top_0\bot_0,\top_0\top_1,\top_1\top_1\}$,
 the designated elements are 
$D=\{xy\in A: x\in D_2\}=\{\top_0\bot_0,\top_0\top_1,\top_1\top_1\}$,
$\neg_\Mt(\top_0\bot_0)=\neg_\Mt(\top_0\top_1)=\{\bot_0\top_0\}$ and
$\neg_\Mt(\bot_0\top_0)=\{\top_0\bot_0\}$ and $\neg_\Mt(\top_1\top_1)=\{\top_0\top_1,\top_1\top_1\}$.
The function $h:A\to A_{1,2}$ given by
$h(xy)=x$ defines a { covering} strict homomorphism $h:\Mt\to\Mt_2$, and 
$\Mt\in \Rexp(\Mt_2)$.
Finally, consider $g:A\to A_{1,2}$ given by
 $g(\bot_0\bot_0)=\bot_0$, $g(\top_0\bot_0)=\top_0$ and
$g(\top_0\top_1)=h'(\top_1\top_1)=\top_1$.
We have that $g$
 defines a covering strict homomorphism $g:\Mt\to\Mt_1$, and thus $\Mt_1\in \QuoMd(\Mt)$
 since we already know that $\der_{\Mt_2}{\subseteq}\der_{\Mt_1}$.

 We conclude that $\der_{\Mt_1}{=}\der_{\Mt_2}{=}\der_{\Mt_3}{=}\der_{\Mt_4}{=}\der_{R}$, and by Theorem~\ref{th:countablereflect} we get that
$\Mt_i\in \mathsf{Hom}_s^{\der_{\Mt_j}}\Rexp(\Mt_j)$ for every $1\leq i, j\leq 4$.
{
For instance, to see that $\Mt_2\in \QuoMu(\Rexp(\Mt_1))$,
consider now the Nmatrix $\Mt=\tuple{A,\cdot_\Mt,D}$
whose truth values are the sequences $xy$ with $x,y\in A_{1,2}$ and $y\in\neg_1(x)$ ($y$ is a look-ahead of $x$ in $\Mt_1$), i.e., 
$A=\{\bot_0\top_0,\top_0\bot_0,\top_1\bot_0,\top_1\top_1\}$, 
$D=\{\top_0\bot_0,\top_1\bot_0,\top_1\top_1\}$,
and
$\neg_\Mt(\bot_0\top_0)=\{\top_0\bot_0\}$, 
$\neg_\Mt(\top_0\bot_0)=\neg_\Mt(\top_1\bot_0)=\{\bot_0\top_0\}$ and
$\neg_\Mt(\top_1\top_1)=\{\top_1\bot_0,\top_1\top_1\}$. 
As before, the function $h:A\to A_{1,2}$ given by
$h(xy)=x$ defines a { covering} strict homomorphism $h:\Mt\to\Mt_1$, yielding $\Mt\in \Rexp(\Mt_1)$. Further, the function $g:A\to A_{1,2}$ given by
$g(\bot_0\top_0)=\bot_0$, 
$g(\top_0\bot_0)=h_1(\top_1\bot_0)=\top_0$ 
and 
 $g(\top_1\top_1)=\top_1$,  defines a covering strict homomorphism
 $g:\Mt\to\Mt_2$.
}
\hfill$\triangle$
\end{example}

\subsection{  
Products and ultraproducts} 
Let us now look at products and ultraproducts of Nmatrices: this
will allow us to deal with non-denumerable models and also to better characterize single-conclusion logics.

\begin{definition}

 Let $I$ be a set, and $\Mt_i=\tuple{A_i,\cdot_i,D_i}$ with $i\in I$ be a family of $\Sigma$-Nmatrices. 
 The \emph{product}\index{product of Nmatrices} $\Pi_{i\in I} \Mt_i$ is the Nmatrix $\tuple{\Pi_{i\in I}A_i,\cdot_I,\Pi_{i\in I}D_i}$
 where 
 $$\conn_I(s_1,\ldots,s_k)=\Pi_{i\in I}\conn_i(\pi_i(s_1),\ldots,\pi_i(s_k)),$$
 where $\pi_i:\Pi_{i\in I}A_i\to A_i$ is the corresponding projection function for each $i\in I$.
  \end{definition}

 Note that if every $\Mt_i$ is deterministic then so is $\Pi_{i\in I}\Mt_i$.  
 Given a class $\cM$ of $\Sigma$-Nmatrices let $\Pi(\cM)$ denote the class of all isomorphic copies of products of families contained in $\cM$.\\

It is clear that $v\in\Val(\Pi_{i\in I}\Mt_i)$ if and only if $\pi_i\circ v\in\Val(\Mt_i)$ for every $i\in I$. Hence, given a family of valuations $v_i\in\Val(\Mt_i)$ for each $i\in I$, there is a unique $v\in\Val(\Pi_{i\in I}\Mt_i)$ such that $v_i=\pi_i\circ v$ for every $i\in I$.

\paragraph*{Single-conclusion logics viewed as multiple-conclusion.} Throughout  the present paper, we have been essentially concerned with multiple-conclusion logics. Unsurprisingly, the usual notion of single-conclusion logic can be characterized within the more general context. Indeed, a single-conclusion logic $\vdash$ can be recast as (the single-conclusion companion $\vdash_\der$ of) a multiple-conclusion logic $\der$ whose models are closed under arbitrary products.
{
By picking $\der{=}\der_\vdash$, we precisely obtain
$\Nmatr(\der)=\Nmatr(\der_\vdash)=\Nmatr(\vdash)$. The next lemma shows that the closure under products of any adequate semantics for $\vdash$ is also adequate for $\der_\vdash$.
}
 
\begin{lemma}
For any class $\cM$ of $\Sigma$-Nmatrices, we have that
 $\vdash_\cM{=}\vdash_{\Pi(\cM)}$
 and 
 $\der_{\vdash_\cM}{=}\der_{\Pi(\cM)}$.
\end{lemma}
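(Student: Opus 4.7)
The plan is to reduce both equalities to the product--valuation correspondence recalled immediately before the statement: $v\in\Val(\Pi_{i\in I}\Mt_i)$ iff $\pi_i\circ v\in\Val(\Mt_i)$ for every $i\in I$, with $v[\Gamma]\subseteq\Pi_{i\in I}D_i$ iff $(\pi_i\circ v)[\Gamma]\subseteq D_i$ for every $i\in I$, and any family $(v_i)_{i\in I}$ with $v_i\in\Val(\Mt_i)$ assembles into a unique such $v$. Combined with the observation that every $\Mt\in\cM$ is (isomorphic to) the singleton product indexed by $\{0\}$, so $\cM\subseteq\Pi(\cM)$, essentially everything follows mechanically.

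First I prove $\vdash_\cM{=}\vdash_{\Pi(\cM)}$. The inclusion $\vdash_{\Pi(\cM)}{\subseteq}\vdash_\cM$ is immediate from $\cM\subseteq\Pi(\cM)$ and the definition of $\vdash$ as an intersection over the given class. For the converse, assume $\Gamma\vdash_\cM\varphi$ and pick any $\Pi_{i\in I}\Mt_i\in\Pi(\cM)$ together with a valuation $v\in\Val(\Pi_{i\in I}\Mt_i)$ such that $v[\Gamma]\subseteq\Pi_{i\in I}D_i$. By the correspondence each $\pi_i\circ v$ is a valuation on $\Mt_i\in\cM$ sending $\Gamma$ into $D_i$, so $(\pi_i\circ v)(\varphi)\in D_i$ by hypothesis. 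Componentwise this yields $v(\varphi)\in\Pi_{i\in I}D_i$, proving $\Gamma\vdash_{\Pi(\cM)}\varphi$.

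For the second equality, $\der_{\vdash_\cM}{\subseteq}\der_{\Pi(\cM)}$ follows at once: if $\Gamma\der_{\vdash_\cM}\Delta$ then $\Gamma\vdash_\cM\varphi$ for some $\varphi\in\Delta$; by the previous paragraph $\Gamma\vdash_{\Pi(\cM)}\varphi$, i.e.\ $\Gamma\der_{\Pi(\cM)}\{\varphi\}$, and (D) gives $\Gamma\der_{\Pi(\cM)}\Delta$. The converse $\der_{\Pi(\cM)}{\subseteq}\der_{\vdash_\cM}$ I argue contrapositively. Assume $\Gamma\not\der_{\vdash_\cM}\Delta$, so by definition of $\der_{\vdash_\cM}$ one has $\Gamma\not\vdash_\cM\varphi$ for every $\varphi\in\Delta$; for each such $\varphi$ choose a witnessing $\Mt_\varphi=\tuple{A_\varphi,\cdot_\varphi,D_\varphi}\in\cM$ and $v_\varphi\in\Val(\Mt_\varphi)$ with $v_\varphi[\Gamma]\subseteq D_\varphi$ but $v_\varphi(\varphi)\notin D_\varphi$. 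Glue these into the unique $v\in\Val(\Pi_{\varphi\in\Delta}\Mt_\varphi)$ with $\pi_\varphi\circ v=v_\varphi$ for every $\varphi\in\Delta$. Then $v[\Gamma]\subseteq\Pi_{\varphi\in\Delta}D_\varphi$, while for each $\varphi\in\Delta$ the $\varphi$-th coordinate of $v(\varphi)$ is $v_\varphi(\varphi)\notin D_\varphi$, so $v(\varphi)\notin\Pi_{\varphi\in\Delta}D_\varphi$. Hence $v$ refutes $\Gamma\der_{\Pi(\cM)}\Delta$ on the product $\Pi_{\varphi\in\Delta}\Mt_\varphi\in\Pi(\cM)$.

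No serious obstacle arises: the entire argument is driven by the product--valuation correspondence already recorded in the text. The one mildly delicate move is the choice to index the product in the contrapositive step \emph{by $\Delta$ itself}, so that one has, at each coordinate, a witness tailored precisely to the formula $\varphi\in\Delta$ whose designation must be defeated; once this indexing is made, the gluing and the verification that both semantic conditions fail simultaneously are routine.
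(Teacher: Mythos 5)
Your proof is correct and follows essentially the same route as the paper's: one inclusion from $\cM\subseteq\Pi(\cM)$, the other from the product--valuation correspondence, and, for the multiple-conclusion equality, the same key device of indexing the product by $\Delta$ and gluing the per-formula counterwitnesses. The only cosmetic differences are that you argue the first converse directly rather than contrapositively and derive $\der_{\vdash_\cM}\subseteq\der_{\Pi(\cM)}$ from the first equality plus dilution instead of from the biconditional chain.
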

\begin{proof}
From $\cM\subseteq \Pi(\cM)$ it
 follows that 
$\vdash_{\Pi(\cM)}{\subseteq}\vdash_\cM$.
For the converse inclusion, let $\Gamma\not\vdash_{\Pi(\cM)} \varphi$.
Then there is a valuation $v$ over  $\Pi_{i\in I}\Mt_i$ with $\Mt_i\in\cM$ for $i\in I$ such that
$v[\Gamma]\subseteq \Pi_{i\in I}D_i$ and $v(\varphi)\notin \Pi_{i\in I}D_i$.
Hence, there is $i\in I$ such that
$(\pi_i\circ v)[\Gamma]\subseteq D_i$
and $(\pi_i\circ v)(\varphi)\notin D_i$. Thus, we have $\vdash_\cM{=}\vdash_{\Pi(\cM)}$.

 By definition, 
 $\Gamma\not\der_{\vdash_\cM}\Delta$ if and only if $\Gamma\not\vdash_\cM \varphi$ for each
 $\varphi\in \Delta$.
Hence, for each $\varphi\in \Delta$
there is a valuation $v_\varphi\in\Val(\Mt_\varphi)$ on $\Mt_\varphi=\tuple{A_\varphi,\cdot_\varphi,D_\varphi}\in \cM$
such that $v_\varphi[\Gamma]\subseteq D_\varphi$ and 
 $v_\varphi(\varphi)\notin D_\varphi$.
But this is equivalent to having a valuation $v\in\Val(\Pi_{\varphi\in\Delta}\Mt_\varphi)$ such that $v[\Gamma]\subseteq \Pi_{\varphi\in\Delta}D_\varphi$ and $v[\Delta]\cap \Pi_{\varphi\in\Delta}D_\varphi=\emptyset$, that is, 
$\Gamma\not\der_{\Pi(\cM)}\Delta$. We conclude that $\der_{\vdash_\cM}{=} \der_{\Pi(\cM)}$.
\end{proof}

\paragraph*{Ultraproducts.} We are now going to see that ultraproducts of Nmatrices can  be used to characterize compactness  in the multiple-conclusion setting too.

 \begin{definition}  
 
 Let $I$ be a set, $\Uf$ an ultrafilter\footnote{Recall that an ultrafilter on $I$ is a set $\Uf\subseteq \wp(I)$ such that $\emptyset\not\in \Uf$,
 if $X\subseteq Y\subseteq I$ and $X\in \Uf$ then $Y\in \Uf$, 
 if $X,Y\in \Uf$ then $X\cap Y\in \Uf$, and for every $X\subseteq I$ either $X\in \Uf$ or $I\setminus X\in \Uf$. } on $I$, and $\Mt_i=\tuple{A_i,\cdot_i,D_i}$ with $i\in I$ be a family of $\Sigma$-Nmatrices. The \emph{ultraproduct}\index{ultraproduct of Nmatrices} $\Pi_\Uf \Mt_i$ is the quotient Nmatrix 
 $(\Pi_{i\in I}\Mt_i)_{/\!\equiv_\Uf}$ where
 $s\equiv_\Uf t$ whenever $\{i\in I:\pi_i(s)=\pi_i(t)\}\in \Uf$.

 Given a class $\cM$ of Nmatrices  we denote by $\Up(\cM)$
the class of isomorphic copies of ultraproducts of  families of Nmatrices in $\cM$.
\end{definition}

It is straightforward to check that the above-defined relation $\equiv_\Uf$ is an equivalence on $\Pi_{i\in}A_i$. 
To simplify the notation,
we denote by $[s]_\Uf$ (instead of $[s]_{\equiv_\Uf}$) the equivalence class of each 
$s\in\Pi_{i\in}A_i$.

As one would expect, $[s]_\Uf$ is designated in $\Pi_\Uf \Mt_i$ precisely if $\{i\in I:\pi_i(s)\in D_i\}\in\Uf$. The following result shows that the interpretation of connectives in ultraproducts of Nmatrices also behaves as expected. 

\begin{lemma}\label{lem:up}

Let $I$ be a set, $\Uf$ an ultrafilter on $I$, and $\Mt_i=\tuple{A_i,\cdot_i,D_i}$ with $i\in I$ a family of $\Sigma$-Nmatrices whose {ultraproduct} $\Pi_\Uf \Mt_i=\tuple{(\Pi_{i\in I}A_i)_{/\!\equiv_\Uf},\cdot_\Uf,(\Pi_{i\in I}D_i)_{/\!\equiv_\Uf}}$. Then, for $k<\omega$ and $\conn\in\Sigma^k$, we have that $[s]_\Uf\in  \conn_{\equiv_\Uf}([s_1]_\Uf,\ldots,[s_k]_\Uf)$ 
if and only if $X=\{i\in I:\pi_i(s)\in \conn_{i}(\pi_i(s_1),\ldots,\pi_i(s_k))\}\in \Uf$.

 \end{lemma}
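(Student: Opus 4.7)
The plan is to unfold the definitions of product and quotient, reducing the statement to a routine choice argument that exploits the ultrafilter properties and the non-emptiness condition built into the definition of an Nmatrix.

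First I would unfold the left-hand side. By Definition~\ref{quo1}, $[s]_\Uf\in\conn_{\equiv_\Uf}([s_1]_\Uf,\ldots,[s_k]_\Uf)$ holds precisely when there exist representatives $t\equiv_\Uf s$ and $t_j\equiv_\Uf s_j$ (for $1\leq j\leq k$) such that $t\in\conn_I(t_1,\ldots,t_k)$. By the definition of the product Nmatrix, the latter condition is equivalent to $\pi_i(t)\in\conn_i(\pi_i(t_1),\ldots,\pi_i(t_k))$ for \emph{every} $i\in I$. So the task reduces to linking such pointwise membership on all of $I$ with membership on some $\Uf$-large set.

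For the forward direction, given such $t, t_1, \ldots, t_k$, I would let $Y_0=\{i:\pi_i(t)=\pi_i(s)\}$ and $Y_j=\{i:\pi_i(t_j)=\pi_i(s_j)\}$. Each $Y_j\in\Uf$ by definition of $\equiv_\Uf$, so the finite intersection $Y=Y_0\cap Y_1\cap\cdots\cap Y_k$ also belongs to $\Uf$. On $Y$, one may replace each $\pi_i(t_j)$ with $\pi_i(s_j)$ and $\pi_i(t)$ with $\pi_i(s)$ in the pointwise membership, obtaining $Y\subseteq X$. Upward closure of $\Uf$ then gives $X\in\Uf$.

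For the converse, assuming $X\in\Uf$, I would construct the desired $t, t_1, \ldots, t_k$ by cases on coordinates. For $i\in X$, set $\pi_i(t)=\pi_i(s)$ and $\pi_i(t_j)=\pi_i(s_j)$; the defining property of $X$ then guarantees the pointwise membership. For $i\notin X$, pick any values $a_{1,i},\ldots,a_{k,i}\in A_i$, and exploit the fact that $\conn_i(a_{1,i},\ldots,a_{k,i})$ is \emph{non-empty} (this is exactly the Nmatrix requirement) to choose $b_i$ from it; set $\pi_i(t_j)=a_{j,i}$ and $\pi_i(t)=b_i$. By construction, $\{i:\pi_i(t)=\pi_i(s)\}\supseteq X$ and $\{i:\pi_i(t_j)=\pi_i(s_j)\}\supseteq X$, all of which lie in $\Uf$ by upward closure, so $t\equiv_\Uf s$ and $t_j\equiv_\Uf s_j$, while $\pi_i(t)\in\conn_i(\pi_i(t_1),\ldots,\pi_i(t_k))$ holds on all of $I$. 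The main subtlety — and the only place where the non-deterministic setting differs meaningfully from the classical ultraproduct argument for matrices — is precisely this off-$X$ choice, which only goes through because Nmatrix operations are required to return non-empty sets; no choice principle beyond standard pointwise choice is needed, and in the deterministic case the values on $X\setminus I$ are simply forced.
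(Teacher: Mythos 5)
Your proof is correct and follows essentially the same route as the paper's: the forward direction is the identical finite-intersection argument, and the converse uses the same idea of filling in the coordinates outside $X$ by exploiting non-emptiness of the $\conn_i$ (the paper simplifies slightly by keeping $s_1,\ldots,s_k$ as representatives and only modifying the output tuple $t$, but this is immaterial). One trivial slip: in your closing remark, ``the values on $X\setminus I$'' should read $I\setminus X$.
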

\begin{proof}
If $[s]_\Uf\in  \conn_{\equiv_\Uf}([s_1]_\Uf,\ldots,[s_k]_\Uf)$ then there are $r\equiv_\Uf s$, $r_1\equiv_\Uf s_1$, $\dots$, $r_k\equiv_\Uf s_k$ such that $r\in\conn_{I}(r_1,\dots,r_k)$, that is, $\pi_i(r)\in\conn_i(\pi_i(r_1),\dots,\pi_i(r_k))$ for every $i\in I$. By definition of $\equiv_\Uf$ we have that $Y=\{i\in I:\pi_i(r)=\pi_i(s)\}\in\Uf$, and $Y_j=\{i\in I:\pi_i(r_j)=\pi_i(s_j)\}\in\Uf$ for $1\leq j\leq k$. Hence, $Z=Y\cap Y_1\cap\dots\cap Y_k\in\Uf$ and $Z=\{i\in I:\pi_i(r)=\pi_i(s),\pi_i(r_1)=\pi_i(s_1),\dots,\pi_i(r_k)=\pi_i(s_k)\}\subseteq X$ which guarantees that $X\in\Uf$.

Reciprocally, knowing $X\in\Uf$, take any $r\in\conn_I(s_1,\dots,s_k)$ with $\pi_i(r)=\pi_i(s)$ for all $i\in X$. We have $r\equiv_\Uf s$ and thus $[s]_\Uf=[r]_\Uf\in\conn_{\equiv_\Uf}([s_1]_\Uf,\ldots,[s_k]_\Uf)$.
\end{proof}

The following result   
shows that we are working with
 a smooth generalization of the notion of ultraproduct to the non-deterministic environment.

\begin{proposition}\label{UPdet}
Ultraproducts preserve determinism, that is, 
if $\Mt_i$ is a deterministic matrix for all $i\in I$, then
so is $\Pi_\Uf \Mt_i$.
 \end{proposition}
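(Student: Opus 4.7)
The plan is to apply Lemma~\ref{lem:up} directly, reducing determinism of $\Pi_\Uf \Mt_i$ to the behavior of the singleton-valued operations $\conn_i$ under the ultrafilter. Since each $\Mt_i$ is deterministic, for every $k<\omega$, every $\conn\in\Sigma^k$, and every tuple $(s_1,\dots,s_k)\in(\Pi_{i\in I}A_i)^k$, there is a unique element $t_i\in A_i$ such that $\conn_i(\pi_i(s_1),\dots,\pi_i(s_k))=\{t_i\}$ for every $i\in I$. The goal is to show that $\conn_{\equiv_\Uf}([s_1]_\Uf,\dots,[s_k]_\Uf)$ is a singleton in $\Pi_\Uf\Mt_i$.

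First I would verify non-emptiness. Define $s\in\Pi_{i\in I}A_i$ pointwise by setting $\pi_i(s)=t_i$ for each $i\in I$. Then $s\in\conn_I(s_1,\dots,s_k)$ by construction of the product, so $\pi_i(s)\in\conn_i(\pi_i(s_1),\dots,\pi_i(s_k))$ holds for every $i\in I$; in particular, the set $\{i\in I:\pi_i(s)\in\conn_i(\pi_i(s_1),\dots,\pi_i(s_k))\}$ equals $I\in\Uf$, and by Lemma~\ref{lem:up} we obtain $[s]_\Uf\in\conn_{\equiv_\Uf}([s_1]_\Uf,\dots,[s_k]_\Uf)$.

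For uniqueness, suppose $[s]_\Uf,[s']_\Uf\in\conn_{\equiv_\Uf}([s_1]_\Uf,\dots,[s_k]_\Uf)$. By Lemma~\ref{lem:up}, the sets $X=\{i\in I:\pi_i(s)\in\conn_i(\pi_i(s_1),\dots,\pi_i(s_k))\}$ and $X'=\{i\in I:\pi_i(s')\in\conn_i(\pi_i(s_1),\dots,\pi_i(s_k))\}$ both belong to $\Uf$. On $X$ we have $\pi_i(s)=t_i$ and on $X'$ we have $\pi_i(s')=t_i$, so on $X\cap X'\in\Uf$ the projections agree: $\pi_i(s)=\pi_i(s')$. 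By definition of $\equiv_\Uf$ this yields $s\equiv_\Uf s'$, hence $[s]_\Uf=[s']_\Uf$.

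No real obstacle is expected here: the argument is essentially that the ultrafilter intersection of two ``large'' sets remains large, combined with the fact that determinism locally eliminates the choice between distinct outputs. The only thing to keep in mind is that Lemma~\ref{lem:up} already encapsulates all the interplay between the product's operations and the quotient by $\equiv_\Uf$, so the proof is a two-line invocation plus a pointwise construction.
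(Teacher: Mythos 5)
Your proof is correct and follows essentially the same route as the paper's: both reduce determinism of the ultraproduct to Lemma~\ref{lem:up}, intersect the two ``large'' sets in $\Uf$, and use pointwise determinism of the $\Mt_i$ to conclude $s\equiv_\Uf s'$. The only difference is that you also explicitly verify non-emptiness of the output, which the paper leaves implicit since quotients of multialgebras automatically have non-empty operation outputs.
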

\begin{proof}
Suppose that $[s]_\Uf,[r]_\Uf\in  \conn_{\equiv_\Uf}([s_1]_\Uf,\ldots,[s_k]_\Uf)$. From Lemma~\ref{lem:up} we know that $X_s=\{i\in I:\pi_i(s)\in \conn_{\Mt_i}(\pi_i(s_1),\ldots,\pi_i(s_k))\}\in \Uf$ and $X_r=\{i\in I:\pi_i(r)\in \conn_{\Mt_i}(\pi_i(s_1),\ldots,\pi_i(s_k))\}\in \Uf$. Hence, 
$Y=X_s\cap X_r=\{i\in I:\pi_i(s),\pi_i(r)\in \conn_{\Mt_i}(\pi_i(s_1),\ldots,\pi_i(s_k))\}\in \Uf$, 
and since each $\Mt_i$ is deterministic it follows that $Y\subseteq \{i\in I:\pi_i(s)=\pi_i(r)\}\in\Uf$, which guarantees that $s\equiv_\Uf r$ and $[s]_\Uf=[r]_\Uf$.
\end{proof}

Unsurprisingly, valuations on ultraproducts can also be viewed as ultraproducts of valuations. Our construction, however, highlights the fact that, due to non-determinism, valuations are no longer determined by their values on propositional variables. 

\begin{proposition}\label{prop:decomp}
For every valuation $v\in\Val(\Pi_{\Uf}\Mt_i)$ there is a family of valuations $v_i\in\Val(\Mt_i)$ for $i\in I$ such that, for every formula $\varphi\in Fm$, exists $s_\varphi\in \Pi_iA_i$ such that
$v(\varphi)=[s_\varphi]_\Uf$ with $\pi_i(s_\varphi)=v_i(\varphi)$ for each $i\in I$.
\end{proposition}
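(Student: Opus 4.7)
The plan is to define the family $\{v_i\}_{i\in I}$ and the representatives $\{s_\varphi\}_{\varphi\in Fm}$ simultaneously by induction on $\mathsf{depth}(\varphi)$, arranging at every step that the pointwise identity $\pi_i(s_\varphi)=v_i(\varphi)$ holds for \emph{every} $i\in I$, and not merely on some $\Uf$-large set of indices. The valuations $v_i$ are then extracted by $v_i(\varphi):=\pi_i(s_\varphi)$.

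For a variable $p\in P$, pick any $s_p\in\Pi_{i\in I}A_i$ with $[s_p]_\Uf=v(p)$ and set $v_i(p)=\pi_i(s_p)$. For a complex formula $\varphi=\conn(\varphi_1,\ldots,\varphi_k)$, the induction hypothesis supplies $s_{\varphi_1},\ldots,s_{\varphi_k}$ with $[s_{\varphi_j}]_\Uf=v(\varphi_j)$ and $v_i(\varphi_j)=\pi_i(s_{\varphi_j})$ for all $i,j$. Since $v\in\Val(\Pi_\Uf\Mt_i)$, we have $v(\varphi)\in\conn_{\equiv_\Uf}([s_{\varphi_1}]_\Uf,\ldots,[s_{\varphi_k}]_\Uf)$, so choosing any $s'\in v(\varphi)$, Lemma~\ref{lem:up} guarantees that
$$X=\{i\in I:\pi_i(s')\in\conn_i(\pi_i(s_{\varphi_1}),\ldots,\pi_i(s_{\varphi_k}))\}\in\Uf.$$
To upgrade this to an inclusion holding on all of $I$, I would invoke the non-emptiness clause in the definition of a multialgebra: for each $i\in I\setminus X$, pick any $a_i\in\conn_i(\pi_i(s_{\varphi_1}),\ldots,\pi_i(s_{\varphi_k}))$, and define $s_\varphi$ coordinatewise by $\pi_i(s_\varphi)=\pi_i(s')$ on $X$ and $\pi_i(s_\varphi)=a_i$ off $X$. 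Because $s_\varphi$ and $s'$ agree on the $\Uf$-large set $X$, we retain $[s_\varphi]_\Uf=v(\varphi)$, while now pointwise $\pi_i(s_\varphi)\in\conn_i(v_i(\varphi_1),\ldots,v_i(\varphi_k))$ for every $i\in I$. Setting $v_i(\varphi)=\pi_i(s_\varphi)$ makes each $v_i$ a valuation on $\Mt_i$, completing the inductive step.

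The main obstacle I anticipate is conceptual rather than computational: a generic representative of $v(\varphi)$ only respects the non-deterministic constraint imposed by $\conn_i$ on a $\Uf$-large set of factors, which would not suffice to make $v_i$ a valuation on the remaining factors. The fix of redefining coordinates outside a set in $\Uf$ is harmless for the equivalence class but crucially relies on the multialgebra axiom that each $\conn_i(\cdot)$ is non-empty; in the PNmatrix setting mentioned at the end of Section~\ref{sec:prelim}, this step would break down and a decomposition of this form should not be expected in general.
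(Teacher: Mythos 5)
Your proof is correct and follows essentially the same route as the paper's: an induction on formula depth in which a representative of $v(\varphi)$ is repaired outside the $\Uf$-large set supplied by Lemma~\ref{lem:up}, using the non-emptiness of the multioperations $\conn_i$ to fill in the remaining coordinates, and then reading off $v_i(\varphi)=\pi_i(s_\varphi)$. Your closing observation that this step hinges on totality (and would fail for PNmatrices) is accurate, though not needed for the result.
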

\begin{proof}
Let
  $\Theta_n=\{\varphi:
  \mathsf{depth}(\varphi)\leq n\}$ for  $n\geq 0$.
   Note that each $\Theta_n$ is  closed under subformulas.
  For each $i\in I$, we will build $v_i$ inductively, as the limit  of a sequence of prevaluations $v_i^n$ defined on 
  $\Theta_n$, where $v_i^n$ is the restriction of $v_i^{n+1}$ to $\Theta_{n}$.

 The value of each $v_i$ for variables $p\in P=\Theta_0$ is set at step $n=0$,
by picking any $s_{p}$ such that $v(p)=[s_{p}]_\Uf$, and defining $v^0_i(p)=\pi_i(s_{p})$.

In step $n+1>0$, 
for every $\psi\in \Theta_n$, we set $v^{n+1}_i(\psi)=v^n_i(\psi)$. 
By induction hypothesis,
$v^{n+1}_i(\psi)=\pi_i(s_\psi)$ satisfies the requirements regarding formulas 
$\psi\in\Theta_{n}$.
For  $\psi=\conn(\psi_1,\ldots,\psi_k)\in \Theta_{n+1}\setminus\Theta_{n}$,
we surely have that $\psi_j\in \Theta_n$ for $1\leq j\leq k$.
Thus, for some $r_\psi\in \Pi_{i\in I}A_i$ $v(\psi)=[r_\psi]_\Uf\in \conn_{\equiv_\Uf}(v(\psi_1),\ldots,v(\psi_k))=\conn_{\equiv_\Uf}([s_{\psi_1}]_\Uf,\ldots,[s_{\psi_k}]_\Uf)$.
Note that, by Lemma~\ref{lem:up}, we know that 
$X=\{i\in I:\pi_i(r_\psi)\in \conn_{\Mt_i}(\pi_i(s_{\psi_1}),\ldots,\pi_i(s_{\psi_k}))\}\in \Uf$.
Consider any $s_\psi\in\conn_I(s_{\psi_1},\ldots,s_{\psi_k})$ such that $\pi_i(s_\psi)=\pi_i(r_\psi)$ for all $i\in X$.
We have $r_\psi\equiv_\Uf s_\psi$.
By setting $v_i^{n+1}(\psi)=\pi_i(s_\psi)$ we 
guarantee that the $v^{n+1}_i$ are prevaluations on $\Mt_i$ defined over $\Theta_{n+1}$,
and that for every $\varphi\in \Theta_{n+1}$
 we have 
 $v(\varphi)=[s_\varphi]_\Uf=[r_\varphi]_\Uf$ where $\pi_i(s_\varphi)=v^{n+1}_i(\varphi)$ for each $i\in I$.
Picking for each $i\in I$, $v_i(\varphi)=v^{n}_i(\varphi)$ whenever $\varphi\in \Theta_n$ terminates the proof.
\end{proof}

\begin{proposition}\label{valsUP}
Given valuations $v_i\in\Val(\Mt_i)$ for $i\in I$
and an ultrafilter $\Uf$ over $I$,
there is an unique $v\in \Val(\Pi_\Uf\Mt_i)$ such that for each $\varphi\in Fm$ there exists $s_\varphi\in \Pi_{i\in I}A_i$ such that
$v(\varphi)=[s_\varphi]_\Uf$ and $\pi_i(s_\varphi)=v_i(\varphi)$.
\end{proposition}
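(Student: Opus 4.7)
The plan is to construct $v$ directly by lifting the family $(v_i)_{i \in I}$ pointwise and then quotienting by the ultrafilter. For each formula $\varphi \in Fm$, I would define $s_\varphi \in \Pi_{i \in I} A_i$ as the tuple such that $\pi_i(s_\varphi) = v_i(\varphi)$ for every $i \in I$, and set $v(\varphi) := [s_\varphi]_\Uf$. This construction is immediate, with no choice needed formula-by-formula since each $v_i$ is a total function on $Fm$.

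Next I would verify that $v$ is a valuation on $\Pi_\Uf \Mt_i$. Fix $k < \omega$, $\conn \in \Sigma^k$ and $\varphi_1, \ldots, \varphi_k \in Fm$. I need to show that
\[
v(\conn(\varphi_1, \ldots, \varphi_k)) \in \conn_{\equiv_\Uf}([s_{\varphi_1}]_\Uf, \ldots, [s_{\varphi_k}]_\Uf).
\]
By Lemma~\ref{lem:up}, this reduces to checking that
\[
X = \{i \in I : \pi_i(s_{\conn(\varphi_1, \ldots, \varphi_k)}) \in \conn_i(\pi_i(s_{\varphi_1}), \ldots, \pi_i(s_{\varphi_k}))\} \in \Uf.
\]
But by construction $\pi_i(s_{\conn(\varphi_1, \ldots, \varphi_k)}) = v_i(\conn(\varphi_1, \ldots, \varphi_k))$ and $\pi_i(s_{\varphi_j}) = v_i(\varphi_j)$, so since each $v_i \in \Val(\Mt_i)$ one has $v_i(\conn(\varphi_1, \ldots, \varphi_k)) \in \conn_i(v_i(\varphi_1), \ldots, v_i(\varphi_k))$ for \emph{every} $i \in I$. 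Hence $X = I \in \Uf$, as required.

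For uniqueness, suppose $v' \in \Val(\Pi_\Uf \Mt_i)$ also satisfies the stated condition, i.e.\ for each $\varphi$ there is some $s'_\varphi$ with $v'(\varphi) = [s'_\varphi]_\Uf$ and $\pi_i(s'_\varphi) = v_i(\varphi)$ for all $i \in I$. Then $\{i \in I : \pi_i(s_\varphi) = \pi_i(s'_\varphi)\} = I \in \Uf$, so $s_\varphi \equiv_\Uf s'_\varphi$ and thus $v(\varphi) = v'(\varphi)$.

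I do not anticipate a real obstacle: the non-trivial content already lives in Lemma~\ref{lem:up}, and the statement is essentially the converse (or companion) of Proposition~\ref{prop:decomp}, which handled the harder direction of decomposing a valuation on an ultraproduct. The only subtlety worth flagging is that the witness tuple $s_\varphi$ is not canonical —- any representative with the right coordinates will do —- but the ultraproduct quotient absorbs this ambiguity, which is precisely what makes uniqueness of $v$ (and not of the $s_\varphi$) hold.
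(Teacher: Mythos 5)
Your proposal is correct and follows essentially the same route as the paper's (much terser) proof: define $s_\varphi$ coordinatewise by $\pi_i(s_\varphi)=v_i(\varphi)$, observe that the valuation condition holds because the relevant index set is all of $I\in\Uf$ (via Lemma~\ref{lem:up}), and derive uniqueness from the fact that the $v_i$'s determine $s_\varphi$ up to $\equiv_\Uf$. Your write-up simply makes explicit the steps the paper leaves implicit.
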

\begin{proof}
Taking $s_\varphi$ defined by $\pi_i(s_\varphi)=v_i(\varphi)$ and making
$v(\varphi)=[s_\varphi]_\Uf$ we have that $v\in \Val(\Pi_{\Uf}\Mt_i)$ since  $I\in \Uf$.
Uniqueness follows from the fact that the $v_i$'s completely define $s_\varphi$ and therefore $v$.
\end{proof}

\paragraph*{Recovering an Nmatrix from its countable parts.}
 
Since we always work with countable languages, it is clear that countable subNmatrices of models are enough to characterize a logic.
Let $\cM$ be the set of all (non-empty) countable subNmatrices of an arbitrary $\Sigma$-Nmatrix $\Mt$. Then we  have
$\der_\Mt{=}\der_\cM$ and $\Mt$ can always be embedded in an ultraproduct of $\cM$.
\begin{proposition}\label{prop:denum}
 Let $\cM=\{\Mt_i:i\in I\}$ be the set of all (non-empty) countable subNmatrices of $\Mt$.
 There is 
 an injective strict homomorphism 
 $h:\Mt\to \Pi_{\Uf} \Mt_i$ for some ultrafilter $\Uf$ over $I$. Thus, 
 $\Mt\in \SubN(\Up(\cM))$.  
\end{proposition}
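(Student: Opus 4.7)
The intended embedding is the classical one: associate to each $a\in A$ the equivalence class of the tuple that projects to $a$ on those subNmatrices which contain $a$. The main work is to choose the ultrafilter $\Uf$ large enough to capture both membership in $A$ and the non-deterministic operations. For each $a\in A$, let $J_a=\{i\in I:a\in A_i\}$. For each \emph{positive fact} $\phi$ about $\Mt$, i.e.~each statement of the form $y\in\conn_\Mt(a_1,\ldots,a_k)$ that is true in $\Mt$, let
$$J_\phi=\{i\in I: a_1,\ldots,a_k,y\in A_i\text{ and }y\in\conn_i(a_1,\ldots,a_k)\}.$$
The plan is to pick any ultrafilter $\Uf$ on $I$ extending the filter generated by the family $\{J_a:a\in A\}\cup\{J_\phi:\phi\text{ holds in }\Mt\}$.

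The first step is to verify that this family has the finite intersection property. Given finitely many elements $a_1,\ldots,a_m\in A$ and finitely many positive facts $\phi_1,\ldots,\phi_n$ (mentioning finitely many further elements of $A$), I would build a countable subNmatrix $\Mt_j$ of $\Mt$ witnessing all of them. Let $A^{(0)}$ be the (finite) set of elements appearing in $\{a_1,\ldots,a_m,\phi_1,\ldots,\phi_n\}$. Inductively, define $A^{(n+1)}$ by adding, for each $\conn\in\Sigma^k$ and each tuple $(x_1,\ldots,x_k)\in (A^{(n)})^k$ such that $\conn_\Mt(x_1,\ldots,x_k)\cap A^{(n)}=\emptyset$, one chosen element of $\conn_\Mt(x_1,\ldots,x_k)$. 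Since $\Sigma$ is countable and each $A^{(n)}$ is countable, $A_j=\bigcup_{n<\omega}A^{(n)}$ is countable. Now define $\conn_j(x_1,\ldots,x_k)$ for $(x_1,\ldots,x_k)\in A_j^k$ to be any non-empty subset of $\conn_\Mt(x_1,\ldots,x_k)\cap A_j$ that contains $y_\ell$ whenever some $\phi_\ell$ is of the form $y_\ell\in\conn_\Mt(x_1,\ldots,x_k)$ (this is possible because $y_\ell\in A_j$ by construction). Set $D_j=D\cap A_j$. Then $\Mt_j=\tuple{A_j,\cdot_j,D_j}$ is a countable subNmatrix of $\Mt$, so $j\in I$, and by construction $j\in\bigcap_\ell J_{a_\ell}\cap\bigcap_\ell J_{\phi_\ell}$.

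Once $\Uf$ is fixed, define $h:A\to\Pi_\Uf\Mt_i$ as follows: for each $a\in A$, pick $s_a\in\Pi_{i\in I}A_i$ with $\pi_i(s_a)=a$ for every $i\in J_a$ (and arbitrary otherwise), and set $h(a)=[s_a]_\Uf$. Injectivity is immediate: for $a\neq b$, the set $J_a\cap J_b\in\Uf$ is disjoint from $\{i:\pi_i(s_a)=\pi_i(s_b)\}$, so $s_a\not\equiv_\Uf s_b$. For the homomorphism property, given $y\in\conn_\Mt(a_1,\ldots,a_k)$, the fact $\phi$ is true in $\Mt$, hence $J_\phi\in\Uf$; by construction, $\pi_i(s_y)=y\in\conn_i(a_1,\ldots,a_k)=\conn_i(\pi_i(s_{a_1}),\ldots,\pi_i(s_{a_k}))$ for every $i\in J_\phi$, and Lemma~\ref{lem:up} yields $h(y)\in\conn_{\equiv_\Uf}(h(a_1),\ldots,h(a_k))$. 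Strictness is similar: since $D_i=D\cap A_i$, on $J_a\in\Uf$ the element $\pi_i(s_a)=a$ lies in $D_i$ iff $a\in D$, so $h(a)\in D_\Uf$ iff $a\in D$.

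The main obstacle is the construction of the witnessing countable subNmatrix in the second paragraph. One must be careful that the flexibility allowed by the definition of subNmatrix (namely, $\conn_j(x_1,\ldots,x_k)$ need only be a non-empty subset of $\conn_\Mt(x_1,\ldots,x_k)\cap A_j$) is used to simultaneously (i) include every required positive fact $\phi_\ell$ and (ii) keep $\conn_j$ non-empty everywhere on $A_j^k$, all while preserving countability; restricting instead to the literal restriction of $\Mt$ to $A^{(0)}$ would fail since $A^{(0)}$ need not be closed and closing under all images could destroy countability.
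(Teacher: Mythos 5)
Your proposal is correct and follows the same overall strategy as the paper: index the countable subNmatrices, build a filterbase from the sets recording which subNmatrices contain a given element, extend it to an ultrafilter $\Uf$, and embed $a\mapsto[s_a]_\Uf$ using Lemma~\ref{lem:up} for the homomorphism condition. The one genuine difference is that you enlarge the filterbase with the sets $J_\phi$ recording which subNmatrices \emph{witness} each positive fact $y\in\conn_\Mt(a_1,\ldots,a_k)$, and you pay for this with the closure construction of a countable subNmatrix realizing finitely many prescribed facts. This extra care is not cosmetic: the paper's own proof only uses the membership sets $X_a$ and then asserts $Y_a\cap Y_{a_1}\cap\dots\cap Y_{a_k}\subseteq\{i\in I:\pi_i(s_a)\in\conn_i(\pi_i(s_{a_1}),\ldots,\pi_i(s_{a_k}))\}$, which is not valid for arbitrary subNmatrices --- since a subNmatrix may shrink $\conn_\Mt(a_1,\ldots,a_k)$ to a nonempty proper subset omitting $a$ even when it contains all of $a,a_1,\ldots,a_k$, mere co-membership does not force the subNmatrix to witness the fact. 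Your $J_\phi$ sets are exactly what is needed to make that step go through, so your version is the more complete argument; the only mild cost is the verification of the finite intersection property, which your inductive construction of $A_j=\bigcup_{n<\omega}A^{(n)}$ (keeping every operation nonempty while preserving countability) handles correctly.
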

\begin{proof}
Let $\Mt=\tuple{A,\cdot_\Mt,D}$ and $\Mt_i=\tuple{A_i,\cdot_i,D_i}$ for $i\in I$.
Consider for each $a\in A$ the set 
   $X_a=\{i\in I:a \in A_i\}$.
Given any finite $B\subseteq A$ we have that 
$X_B=\bigcap_{a\in B}X_a\neq\emptyset$, since there are always countable subNmatrices containing finitely many
  elements.
Then $F=\{X_B: B\in\wp_{\mathsf{fin}}(A)\}$ 
  is a filterbase. Let $\Uf$ be any ultrafilter extending the filter generated by $F$.

  We fix $a_i\in A_i$ for every $i\in I$, and for each $a\in A$ we consider $s_a\in \Pi_{i\in I}A_i$ given by
  $$\pi_i(s_a)=
\begin{cases}
 a \mbox{ if }a\in A_i\\
 a_i\mbox{ otherwise.}
\end{cases}
$$
and the function
  $h:A\to (\Pi_{i\in I}A_i)_{/\!\equiv_\Uf}$ given by
   $h(a)=[s_a]_\Uf$.
 Clearly, we have $X_a\subseteq Y_a=\{i\in I:\pi_i(s_a)=a\}\in\Uf$. We show that $h$ defines an injective strict homomorphism.

Suppose that $a,b\in A$ are such that
$h(a)=h(b)=[s_a]_\Uf=[s_b]_\Uf$. We have $Y_a,Y_b\in\Uf$. Further, as $s_a\equiv_\Uf s_b$, we also have $Y=\{i\in I:\pi_i(s_a)=\pi_i(s_b)\}\in\Uf$. 
Thus $\emptyset\neq Y\cap Y_a\cap Y_b\in\Uf$, so there exists $i\in I$ with $\pi_i(s_a)=a=b=\pi_i(s_b)$, and $h$ is injective.

Suppose that $a\in \conn_\Mt(a_1,\ldots,a_k)$. To see that $h(a)\in\conn_{\equiv_\Uf}(h(a_1),\dots,h(a_k))$, that is, $[s_a]_\Uf\in\conn_{\equiv_\Uf}([s_{a_1}]_\Uf,\dots,[s_{a_k}]_\Uf)$ it suffices, using Lemma~\ref{lem:up}, to observe that 
$Y_a\cap Y_{a_1}\cap\dots\cap Y_{a_k}\subseteq X=\{i\in I:\pi_i(s_a)\in\conn_i(\pi_i(s_{a_1}),\dots,\pi_i(s_{a_k}))\}\in\Uf$. 

In order to conclude that $h$ is a strict homomorphism we still need to check that $h^{-1}((\Pi_{i\in I}D_i)_{/\!\equiv_\Uf})=D$.
Given $a\in A$, let $Y=\{i\in I:\pi_i(s_a)\in D_i\}$.
It is clear that if $a\in D$ then
$X_a\subseteq Y\in \Uf$ and
therefore $h(a)\in  (\Pi_{i\in I}D_i)_{/\!\equiv_\Uf}$.
Reciprocally, 
if $h(a)\in  (\Pi_{i\in I}D_i)_{/\!\equiv_\Uf}$ we get that 
$Y\in \Uf$ and $\emptyset\neq X_a\cap Y\in \Uf$, so there exists $i\in I$ such that $\pi_i(s_a)=a\in D_i$, and thus $a\in D$.
\end{proof}

\section{Characterizations}\label{sec:results}

In this section  we characterize   key  properties of a logic $\der$ in terms of closure properties of its non-deterministic models, partly extending the results obtained in Czelakowski's~\cite{czela1983}.

\subsection{Characterizing compact logics}

We begin by verifying that the ultraproduct of $\der$-sound Nmatrices is $\der$-sound whenever 
$\der$ is compact.

\begin{proposition}\label{soundup}
For every
compact logic $\der$, the class
$\Nmatr(\der)$ is closed under ultraproducts.
\end{proposition}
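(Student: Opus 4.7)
The plan is to show directly that $\der \subseteq \der_{\Pi_\Uf \Mt_i}$ whenever $\Mt_i \in \Nmatr(\der)$ for all $i \in I$ and $\Uf$ is an ultrafilter on $I$. First, I would assume $\Gamma \der \Delta$ and invoke compactness (property (F)) to obtain finite subsets $\Gamma_0 \subseteq \Gamma$ and $\Delta_0 \subseteq \Delta$ with $\Gamma_0 \der \Delta_0$. Since $\der \subseteq \der_{\Pi_\Uf\Mt_i}$ will follow from $\Gamma_0 \der_{\Pi_\Uf\Mt_i} \Delta_0$ by dilution (D), it suffices to prove the latter.

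Arguing by contradiction, I would suppose that there is a valuation $v \in \Val(\Pi_\Uf \Mt_i)$ with $v[\Gamma_0] \subseteq (\Pi_i D_i)_{/\equiv_\Uf}$ and $v[\Delta_0] \cap (\Pi_i D_i)_{/\equiv_\Uf} = \emptyset$. Applying Proposition~\ref{prop:decomp}, I decompose $v$ into a family $v_i \in \Val(\Mt_i)$, together with choices $s_\varphi \in \Pi_i A_i$ satisfying $v(\varphi) = [s_\varphi]_\Uf$ and $\pi_i(s_\varphi) = v_i(\varphi)$ for each $\varphi \in Fm$. Using the fact (which I would briefly justify, as it is a direct consequence of the definition of $\equiv_\Uf$ together with how ultrafilters behave under modifications on small index sets) that $[s]_\Uf$ is designated if and only if $\{i : \pi_i(s) \in D_i\} \in \Uf$, I obtain:
\begin{itemize}
 \item for each $\varphi \in \Gamma_0$, the set $X_\varphi = \{i \in I : v_i(\varphi) \in D_i\}$ lies in $\Uf$;
 \item for each $\psi \in \Delta_0$, the set $Y_\psi = \{i \in I : v_i(\psi) \in D_i\}$ does not lie in $\Uf$, hence $\overline{Y_\psi} \in \Uf$.
\end{itemize}

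Since $\Gamma_0$ and $\Delta_0$ are finite and ultrafilters are closed under finite intersections, the set
$$Z = \bigcap_{\varphi \in \Gamma_0} X_\varphi \cap \bigcap_{\psi \in \Delta_0} \overline{Y_\psi}$$
belongs to $\Uf$, and is therefore non-empty. Picking any $i \in Z$ yields a valuation $v_i \in \Val(\Mt_i)$ with $v_i[\Gamma_0] \subseteq D_i$ and $v_i[\Delta_0] \cap D_i = \emptyset$, witnessing $\Gamma_0 \not\der_{\Mt_i} \Delta_0$. As $\Mt_i \in \Nmatr(\der)$, this contradicts $\Gamma_0 \der \Delta_0$.

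The crux is really the use of compactness: the decomposition lemma provides, for \emph{each} formula separately, a set of indices in $\Uf$ encoding the designation pattern, but only finitely many such sets can be safely intersected while remaining in $\Uf$. The rest of the argument is a routine translation between satisfaction in the ultraproduct and ultrafilter-almost-everywhere satisfaction in the factors, so I do not expect any real obstacle beyond making sure the characterization of designation in $\Pi_\Uf\Mt_i$ and the ``coherent'' choice of the $s_\varphi$ from Proposition~\ref{prop:decomp} are invoked correctly.
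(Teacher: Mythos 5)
Your proof is correct and follows essentially the same route as the paper's: both rely on compactness to reduce to finite $\Gamma_0,\Delta_0$, decompose the ultraproduct valuation via Proposition~\ref{prop:decomp}, and exploit closure of $\Uf$ under the finitely many intersections arising from $\Gamma_0$ and $\Delta_0$. The only difference is presentational — you argue by contradiction from a putative countervaluation, while the paper argues directly that some $\psi\in\Delta_0$ must be designated — and the two are logically interchangeable.
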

\begin{proof}
Given
set of Nmatrices
 $\cM=\{\Mt_i:i\in I\}\subseteq\Nmatr(\der)$ and
 $\Uf$ an ultrafilter on $I$, we will show that
 $\Pi_\Uf \Mt_i\in\Nmatr(\der)$.

By compactness, if $\Gamma\der \Delta$ we know that there are finite
 $\Gamma_0\subseteq \Gamma$ and $\Delta_0\subseteq \Delta$ such that $\Gamma_0\der\Delta_0$.
 For every valuation $v\in\Val(\Pi_\Uf \Mt_i)$ 
 we know by Proposition~\ref{prop:decomp} that there 
 is a family of valuations $v_i\in\Val(\Mt_i)$ 
 such that
 for every formula $\varphi$ we have
 $v(\varphi)=[s_\varphi]_\Uf$ where $\pi_i(s_\varphi)=v_i(\varphi)$ for all $i\in I$.
 
Assuming that  $v[\Gamma_0]\subseteq (\Pi_{i\in I}D_i)_{/\!\equiv_\Uf}$, we get that 
$X_\varphi=\{i\in I:v_i(\varphi)\in D_i\}\in \Uf$ 
for every $\varphi\in \Gamma_0$, 
and thus also $X=\bigcap_{\varphi\in\Gamma_0}X_\varphi\in\Uf$. 
Let $Y=\{i\in I:v[\Delta_0]\cap D_i=\emptyset\}$.
Since each $\Mt_i\in\Nmatr(\der)$, we have $X\cap Y=\emptyset$ and so $Y\notin\Uf$. Note that $Y=\bigcap_{\psi\in\Delta_0}Y_\psi$ 
with $Y_\psi=\{i\in I:v_i(\psi)\notin D_i\}$, and therefore there must exist $\psi\in\Delta_0$ such that 
$Y_\psi\notin\Uf$, and thus $I\setminus Y_\psi=\{i\in I:v_i(\psi)\in D_i\}\in\Uf$. We conclude that $v(\psi)\in(\Pi_{i\in I}D_i)_{/\!\equiv_\Uf}$, $v[\Delta_0]\cap (\Pi_{i\in I}D_i)_{/\!\equiv_\Uf}\neq\emptyset$, and therefore $\Gamma_0\der_{\Pi_\Uf\Mt_i}\Delta_0$ and $\Gamma\der_{\Pi_\Uf\Mt_i}\Delta$.
 \end{proof}

  We now consider the converse of the above property.

\begin{theorem}\label{charcompact}
 Any logic $\der$  is compact if and only if $\Nmatr(\der)$ is closed under ultraproducts.
\end{theorem}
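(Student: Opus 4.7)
The forward direction is exactly Proposition~\ref{soundup}, so the work lies in the converse. The plan is to argue by contrapositive: assuming $\der$ is not compact, exhibit some $\Gamma \der \Delta$ for which every finite restriction fails, and build an ultraproduct of $\der$-sound Nmatrices which refutes $\Gamma \der \Delta$. Since by hypothesis ultraproducts of $\der$-sound Nmatrices remain $\der$-sound, such an ultraproduct would witness $\Gamma \not\der \Delta$, a contradiction.

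The first step is to secure a complete $\der$-sound semantics to draw countermodels from. By Lemma~\ref{valrexp}, the class $\cL_\der$ of Lindenbaum matrices sits inside $\Nmatr(\der)$ and is complete for $\der$ in the usual sense: whenever $\Gamma' \not\der \Delta'$ there is a Lindenbaum matrix $\mathbb{L}\in\cL_\der$ and a valuation $w\in\Val(\mathbb{L})$ with $w[\Gamma']\subseteq D$ and $w[\Delta']\cap D=\emptyset$ (indeed, take $\Omega\supseteq\Gamma'$ disjoint from $\Delta'$ obtained by (C), and let $w$ be the identity into $\tuple{Fm,\cdot_\Fm,\Omega}$). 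Thus, if $\der$ fails compactness, there are $\Gamma,\Delta\subseteq Fm$ with $\Gamma\der\Delta$ and $\Gamma_0\not\der\Delta_0$ for all finite $\Gamma_0\subseteq\Gamma$ and $\Delta_0\subseteq\Delta$. For each such pair $i=(\Gamma_0,\Delta_0)$, pick via completeness an $\Mt_i\in\Nmatr(\der)$ and a valuation $v_i\in\Val(\Mt_i)$ with $v_i[\Gamma_0]\subseteq D_i$ and $v_i[\Delta_0]\cap D_i=\emptyset$.

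The second step is to choose the ultrafilter on the index set $I$ of finite pairs so as to make all premisses designated and all conclusions non-designated ``in the limit''. For each $\gamma\in\Gamma$ set $X_\gamma=\{(\Gamma_0,\Delta_0)\in I:\gamma\in\Gamma_0\}$, and for each $\delta\in\Delta$ set $Y_\delta=\{(\Gamma_0,\Delta_0)\in I:\delta\in\Delta_0\}$. Any finite intersection of such sets is non-empty (enlarge the finite sets), so the family generates a filter on $I$; extend it to an ultrafilter $\Uf$. Let $\Mt=\Pi_\Uf\Mt_i$, which by hypothesis lies in $\Nmatr(\der)$. Now invoke Proposition~\ref{valsUP} to lift the chosen family of $v_i$'s to a single valuation $v\in\Val(\Mt)$ such that, for every formula $\varphi$, $v(\varphi)=[s_\varphi]_\Uf$ with $\pi_i(s_\varphi)=v_i(\varphi)$.

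It remains to verify that $v$ refutes $\Gamma\der\Delta$ in $\Mt$. For $\gamma\in\Gamma$ the set $\{i:v_i(\gamma)\in D_i\}$ contains $X_\gamma\in\Uf$, so $v(\gamma)$ is designated in $\Mt$; hence $v[\Gamma]\subseteq(\Pi_i D_i)_{/\equiv_\Uf}$. For $\delta\in\Delta$ the set $\{i:v_i(\delta)\notin D_i\}$ contains $Y_\delta\in\Uf$ (by the choice of the $v_i$), so $v(\delta)$ is not designated; hence $v[\Delta]\cap(\Pi_i D_i)_{/\equiv_\Uf}=\emptyset$. This yields $\Gamma\not\der_\Mt\Delta$, contradicting $\Mt\in\Nmatr(\der)$ and $\Gamma\der\Delta$. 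The boundary cases $\Gamma=\emptyset$ or $\Delta=\emptyset$ are handled by the same argument using only the relevant family of sets $X_\gamma$ or $Y_\delta$ to generate the filter.

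The main obstacle is conceptual rather than technical: one must make sure that the ``coherent lift'' of valuations provided by Proposition~\ref{valsUP} is available despite the non-determinism of the $\Mt_i$, which is precisely what distinguishes this argument from its deterministic counterpart. Once that lift is in hand, the ultrafilter construction proceeds exactly as in the classical matrix setting.
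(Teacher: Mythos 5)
Your proposal is correct and follows essentially the same route as the paper's proof: the same index set of pairs of finite subsets of $\Gamma$ and $\Delta$, an ultrafilter generated by the same sets (your $X_\gamma$ and $Y_\delta$ coincide with the paper's $X_{\{\psi\}}$ since $\Gamma\cap\Delta=\emptyset$), the lift of the family of countermodel valuations via Proposition~\ref{valsUP}, and the same verification that the resulting ultraproduct refutes $\Gamma\der\Delta$. The only additions are cosmetic: you make explicit the appeal to Lindenbaum matrices and property (C) for the existence of the countermodels $\Mt_i$, $v_i$, and you note the degenerate cases $\Gamma=\emptyset$ or $\Delta=\emptyset$, both of which the paper leaves implicit.
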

\begin{proof}
We already know from Proposition~\ref{soundup} that $\Nmatr(\der)$ is closed under ultraproducts when $\der$ is compact. For the converse, let us assume that $\der$ is not compact.
 This means that there are infinite sets $\Gamma,\Delta\subseteq Fm$ such that $\Gamma\der\Delta$, but
 $\Gamma_0\not\der\Delta_0$ for all finite $\Gamma_0\subseteq \Gamma$, 
 $\Delta_0\subseteq \Delta$. In particular, we have that $\Gamma\cap \Delta=\emptyset$.
 
 Consider $I=\wp_{\mathsf{fin}}(\Gamma)\times \wp_{\mathsf{fin}}(\Delta)$.
 By ease of notation, we denote the elements of $I$ by $i=\tuple{\Gamma_i,\Delta_i}$.
 For every
 $i=\tuple{\Gamma_i,\Delta_i}\in
 I$ there are $\Mt_i=\tuple{A_i,\cdot_i,D_i}\in \Nmatr(\der)$ and $v_i\in\Val(\Mt_i)$ such that
  $v_i[\Gamma_i]\subseteq D_i$ and $v_i[\Delta_i]\cap D_i=\emptyset$.

For each finite
$\Theta\subseteq \Gamma\cup\Delta$, let $X_\Theta=\{i\in I: \Theta\subseteq \Gamma_i\cup\Delta_i\}$. Note that $X_\Theta=\bigcap_{\psi\in\Theta}X_{\{\psi\}}$. Clearly, we have 
$\tuple{\Theta\cap \Gamma,\Theta\cap \Delta}\in X_{\Theta}\neq \emptyset$, and hence
$F=\{X_\Theta:\Theta\in\wp_{\mathsf{fin}}(\Gamma\cup\Delta)\}$ is a filterbase. Let $\Uf$
 be any
ultrafilter extending the filter generated by $F$.

  By Proposition~\ref{valsUP}, we can pick $v\in \Val(\Pi_\Uf \Mt_i)$
 with $v(\varphi)=[s_\varphi]_\Uf$ and $\pi_i(s_\varphi)=v_i(\varphi)$ for all $\varphi\in Fm$.
For each $\psi\in \Gamma$, we have that $X_{\{\psi\}}\subseteq\{i\in I:v_i(\psi)\in D_i\}\in \Uf$, 
and 
thus $v[\Gamma]\subseteq (\Pi_{i\in I}D_i)_{/\!\equiv_\Uf}$.
Furthermore, 
for every $\psi\in \Delta$, 
we have that 
$X_{\{\psi\}}\subseteq \{i\in I:v_i(\psi)\notin D_i\}\in \Uf$, and 
thus $I\setminus \{i\in I:v_i(\psi)\notin D_i\}=\{i\in I:v_i(\psi)\in D_i\}\notin\Uf$, which implies that
$v[\Delta]\cap (\Pi_{i\in I}D_i)_{/\!\equiv_\Uf}=\emptyset$.
Therefore, $\Gamma\not\der_{ \Pi_\Uf \Mt_i}\Delta$ and $ \Pi_\Uf \Mt_i\notin \Nmatr(\der)$.
\end{proof}

\subsection{Characterizing finitely based logics}

With a little extra work one can also characterize finitely based logics.

\begin{theorem}\label{charfinbased}
 
A logic  $\der$ is finitely based  if and only if $\Nmatr(\der)$ and its complement are both closed under ultraproducts.
\end{theorem}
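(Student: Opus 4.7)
The plan is to prove both directions by combining Theorem~\ref{charcompact} with suitable ultrafilter arguments on indexed families of counterexamples, in the spirit of the classical characterization of finitely axiomatizable first-order theories.

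For $(\Rightarrow)$, suppose $\der = \der_R$ for a finite set $R$ of finite rules. Then $\der$ is compact, so $\Nmatr(\der)$ is closed under ultraproducts by Theorem~\ref{charcompact}. For the complement, fix $\{\Mt_i\}_{i\in I} \subseteq \Nmatr(\der)^c$ and an ultrafilter $\Uf$ on $I$. Each $\Mt_i$ falsifies some rule $r_i \in R$ via a valuation $v_i \in \Val(\Mt_i)$; since $R$ is finite, a single rule $r = \frac{\Gamma_0}{\Delta_0} \in R$ must be the failing one for all $i$ in some $X \in \Uf$. Extending the choice of $v_i$ arbitrarily to $i \notin X$ and invoking Proposition~\ref{valsUP}, one obtains $v \in \Val(\Pi_\Uf \Mt_i)$ whose components verify, by routine ultrafilter manipulations, that $v[\Gamma_0]$ is designated while $v[\Delta_0]$ is not, so $r$ fails in the ultraproduct and $\Pi_\Uf \Mt_i \notin \Nmatr(\der)$.

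For $(\Leftarrow)$, closure of $\Nmatr(\der)$ under ultraproducts yields compactness via Theorem~\ref{charcompact}. Let $R_0$ denote the set of all finite rules valid in $\der$, so $\der = \der_{R_0}$. Suppose for contradiction that no finite $R \subseteq R_0$ axiomatizes $\der$; then for each finite $R \subseteq R_0$ there is $\Mt_R \in \Nmatr(\der_R) \setminus \Nmatr(\der)$. On $I = \partfin(R_0)$, the family $\{\widehat{R} : R \in I\}$ with $\widehat{R} = \{R' \in I : R \subseteq R'\}$ is a filter base (by directedness under union), which I extend to an ultrafilter $\Uf$ on $I$. The crux is to show $\Pi_\Uf \Mt_R \in \Nmatr(\der)$, which together with each $\Mt_R \notin \Nmatr(\der)$ contradicts the assumed closure of $\Nmatr(\der)^c$ under ultraproducts.

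To validate an arbitrary $\Gamma \der \Delta$ in the ultraproduct, compactness supplies finite $\Gamma_0 \subseteq \Gamma$, $\Delta_0 \subseteq \Delta$ with $r = \frac{\Gamma_0}{\Delta_0} \in R_0$. Critically, $\widehat{\{r\}} \in \Uf$ consists of exactly those indices $R$ for which $\Mt_R \in \Nmatr(\der_R)$ validates $r$. Given a valuation $v$ on $\Pi_\Uf \Mt_R$ designating $\Gamma_0$, Proposition~\ref{prop:decomp} decomposes $v$ into $v_R \in \Val(\Mt_R)$; intersecting $\widehat{\{r\}}$ with the finitely many $\Uf$-sets $\{R : v_R(\varphi) \text{ designated}\}$ for $\varphi \in \Gamma_0$ yields some $W \in \Uf$ on which each $v_R$ designates all of $\Gamma_0$ and $\Mt_R$ validates $r$, forcing some $\psi_R \in \Delta_0$ with $v_R(\psi_R)$ designated. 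Finiteness of $\Delta_0$ then lets the ultrafilter single out one $\psi \in \Delta_0$ designated by $v_R$ on a $\Uf$-large subset of $W$, so $v(\psi)$ is designated; dilution concludes $\Gamma \der_{\Pi_\Uf \Mt_R} \Delta$. The main obstacle is exactly this last step: balancing two layers of ultrafilter selection, one over indices (via the filter base $\{\widehat{R}\}$) and one over conclusion formulas (via finiteness of $\Delta_0$), while coordinating them with the valuation decomposition.
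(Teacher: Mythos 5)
Your proof is correct and follows essentially the same route as the paper's: the forward direction is the same finite-partition ultrafilter argument combined with Proposition~\ref{valsUP}, and the converse direction likewise builds counterexample Nmatrices for a directed family of finite approximations of $\der$ and shows their ultraproduct is $\der$-sound via Theorem~\ref{charcompact}, Proposition~\ref{prop:decomp}, and the same two-stage ultrafilter selection (over indices and over the finitely many conclusions in $\Delta_0$). The only cosmetic difference is your choice of index set, $\partfin(R_0)$ with the up-set filter base, where the paper uses a countable increasing chain of finite fragments $\Theta_i$ indexed by $\nats$ with the cofinite ultrafilter; both serve exactly the same purpose.
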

\begin{proof}
It is well known that $\der$ is compact if and only if it is axiomatized by a (possibly infinite) set of finite rules~ \cite{Woj,ShoesmithSmiley}.
Hence, in light
of Theorem~\ref{charcompact}, it is enough to show, under the assumption that $\der$ is compact and therefore that
$\Nmatr(\der)$ is closed under ultraproducts, that $\der$ is 
finitely based if and only if the complement of $\Nmatr(\der)$ is also closed under ultraproducts.\smallskip

First assume that $\der$ is finitely based, with $\der{=}\der_R$ for some finite set of finite rules $R=\{\frac{\Gamma_j}{\Delta_j}:1\leq j\leq k\}$. Given any family   $\Mt_i=\tuple{A_i,\cdot_i,D_i}\notin \Nmatr(\der)$ for $i\in I$, and $\Uf$ an ultrafilter on $I$,
   we will show that  $\Pi_\Uf\Mt_i\notin \Nmatr(\der)$. 
    For $1\leq j\leq k$ let
$X_j=\{i\in I:\Gamma_j\der_{i}\Delta_j$\}.
As $\Mt_i\notin \Nmatr(\der)$ for every $i\in I$, we have that
  $X_1\cap \ldots  \cap X_k=\emptyset\notin\Uf$.
  Hence,  
   there exists 
$1\leq \ell\leq k$ such that $X_\ell\notin \Uf$, and since $\Uf$ is an ultrafilter, $Y=I\setminus X_\ell=\{i\in I:\Gamma_\ell\not\der_{i}\Delta_\ell\}\in \Uf$. 
For each $i\in I$, pick $v_i\in \Val(\Mt_i)$, in such a way that $v_i[\Gamma_\ell]\subseteq D_i$
 and $v_i[\Delta_\ell]\cap D_i=\emptyset$ whenever $i\in Y$.
Using Proposition~\ref{valsUP},
there exists $v\in \Val(\Pi_\Uf\Mt_i)$ such that $v(\varphi)=[s_\varphi]_\Uf$ with $\pi_i(s_\varphi)=v_i(\varphi)$ for every $\varphi\in Fm$.

It follows that $Y\subseteq\{i\in I:v_i[\Gamma_\ell]\subseteq D_i\}\subseteq\{i\in I:v_i(\varphi)\in D_i\}\in \Uf$ for each $\varphi\in\Gamma_\ell$, and thus 
$v[\Gamma_\ell]\subseteq (\Pi_{i\in I}D_i)_{/\!\equiv_\Uf}$.

Also, $X_\ell\supseteq\{i\in I:v_i[\Delta_\ell]\cap D_i\neq\emptyset\} \notin \Uf$, and therefore $\{i\in I:v_i[\Delta_\ell]\cap D_i=\emptyset\}\subseteq\{i\in I:v_i(\psi)\notin D_i\}\in\Uf$ for each $\psi\in\Delta_\ell$. We get that 
$\{i\in I:v_i(\psi)\in D_i\}\notin\Uf$ for each $\psi\in\Delta_\ell$, and thus 
$v[\Delta_\ell]\cap (\Pi_{i\in I}D_i)_{/\!\equiv_\Uf}=\emptyset$.

We conclude that
$\Gamma_\ell\not\der_{\Pi_\Uf\Mt_i}\Delta_\ell$ and so $\Pi_\Uf\Mt_i\notin \Nmatr(\der)$. 
\smallskip

Reciprocally, let us assume that $\der$ is not finitely based. For $i<\omega$, let $\Theta_i=\{\varphi\in Fm: \var(\varphi)\subseteq\{p_j:j\leq i\}, \mathsf{depth}(\varphi)\leq i\}$, and consider the sequence of logics $\der_i{=}\der_{R_i}$ with $R_i=\{\frac{\Gamma}{\Delta}:\Gamma,\Delta\subseteq \Theta_i, \Gamma\der \Delta\}$. Note that $\Theta_i$ is always finite, and thus each $\der_i$ is finitely based. Note also that  $\der_i{\subseteq} \der_j$ for $i\leq j$ and, since we are assuming $\der$ is compact, that $\der{=}\der_R$ with $R=\bigcup_{i<\omega}R_i$. 

Since $\der$
is not finitely based we have that each $\der_i{\subsetneq} \der$,
and we can pick an Nmatrix $\Mt_i=\tuple{A_i,\cdot_i,D_i}\in\Nmatr(\der_i)\setminus \Nmatr(\der)$. 
  We consider the ultrafilter $\Uf$ generated by the cofinite subsets of $\nats$
  and show that $\Pi_\Uf\Mt_i\in \Nmatr(\der)$.
  
  If $\Gamma\der \Delta$, by compactness,
  there are finite $\Gamma_0\subseteq \Gamma$ and 
  $\Delta_0\subseteq \Delta$ such that $\Gamma_0\der \Delta_0$.
  Thus, there exists $k<\omega$ such that $\Gamma_0\cup \Delta_0\subseteq \Theta_k$,
  and hence  
  $\Gamma_0\der_{i}\Delta_0$ for
  $i\geq k$.
  Clearly, using cofiniteness, 
  $\{i<\omega:k\leq i\}\subseteq X=\{i<\omega:\Gamma_0\der_{i}\Delta_0\}\in \Uf$.
 Let $v\in \Val(\Pi_\Uf\Mt_i)$ such that
  $v[\Gamma_0]\subseteq (\Pi_{i\in I} D_i)_{/\equiv_{\Uf}}$. Recall from Proposition~\ref{prop:decomp} that there 
 is a family of valuations $v_i\in\Val(\Mt_i)$ 
 such that
 for every formula $\varphi$ we have
 $v(\varphi)=[s_\varphi]_\Uf$ where $\pi_i(s_\varphi)=v_i(\varphi)$ for all $i\in I$. We then have $X_\varphi=\{i<\omega:v_i(\varphi)\in D_i\}\in\Uf$ for each $\varphi\in\Gamma_0$, and thus also
 $X\cap\bigcap_{\varphi}X_\varphi\subseteq\{i<\omega:v_i[\Delta_0]\cap D_i\neq\emptyset\}\subseteq
 \{i<\omega:v_i(\psi)\in D_i\}\in\Uf$ for some $\psi\in\Delta_0$. Thus, we have that $v[\Delta_0]\cap(\Pi_{i\in I} D_i)_{/\equiv_{\Uf}}\neq\emptyset$, 
and so $\Gamma_0\der_{\Pi_\Uf\Mt_i}\Delta_0$ and 
$\Gamma\der_{\Pi_\Uf\Mt_i}\Delta$.
 We conclude that
 $\Pi_\Uf\Mt_i\in\Nmatr(\der)$ and the complement of $\Nmatr(\der)$ is not closed under ultraproducts.
 \end{proof}

It is well known that the single-conclusion logic induced by a finite logical matrix $\Mt$ need not be finitely axiomatizable by means of single-conclusion rules~\cite{wronski}. However, Shoesmith and Smiley~\cite{ShoesmithSmiley} have shown that the corresponding multiple-conclusion logic is always finitely based, and therefore the class of $\Nmatr(\der_\Mt)$ and its complement are both closed under ultraproducts. As observed by Czelakowski~\cite{czela1983},  Zygmunt~\cite{Zygmunt} has given a model-theoretic proof of this fact.

In general the above is not true about finite Nmatrices. The paper~\cite{synt} presents a 3-valued Nmatrix $\Mt$ whose logic is not finitely based, and 
by the same  technique used in the proof of Theorem~\ref{charfinbased} one can show 
that the complement of $\Nmatr(\der_\Mt)$ is not closed under 
ultraproducts. It is also shown in~\cite{synt} that, if $\Mt$ is {monadic}, then $\der_\Mt$ is always finitely based, and we can even effectively extract from $\Mt$  an analytic multiple-conclusion calculus for the logic.

\subsection{Characterizing $\Nmatr(\der)$ when  $\der$ is compact}

We begin with a few technical lemmas. Given a set $I$, consider two families of $\Sigma$-Nmatrices $\Mt_i=\tuple{A_i,\cdot_{\Mt_i},D_i}$ and $\Mt'_i=\tuple{A'_i,\cdot_{\Mt'_i},D'_i}$ for $i\in I$. Having fixed an ultrafilter $\Uf$ on $I$, let $\equiv$ and $\equiv'$ denote the equivalence relations (both defined as $\equiv_\Uf$) induced on $\Pi_{i\in I}\Mt_i$ and on  $\Pi_{i\in I}\Mt'_i$, respectively.
\begin{lemma}\label{lem:ultracover} 
    Given a family of strict homomorphisms $f_i:\Mt_i\to \Mt'_i$ for $i\in I$,
    we have that
    $f=\Pi_{i\in I}f_i:\Pi_{i\in I}\Mt_i\to \Pi_{i\in I}\Mt'_i$ and  $f_\Uf:\Pi_\Uf\Mt_i\to \Pi_\Uf\Mt'_i$
    given by
    $f_\Uf([s]_\equiv)=[f(t)]_{\equiv'}$ constitute strict homomorphisms, which are covering when 
    $f_i$ is covering for every $i\in I$.
\end{lemma}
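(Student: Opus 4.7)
The plan is to break the proof into the two separate claims for $f$ and $f_\Uf$, and in each case to verify the three required properties (being a multialgebra homomorphism, strictness, and if applicable, being covering) by reducing everything to the component level, where it follows from the hypotheses on the $f_i$.

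For $f=\Pi_{i\in I}f_i$, the idea is that the interpretation of connectives in the product Nmatrix is defined componentwise, so all three properties follow componentwise. More precisely, I would first observe that $f(s)$ is given by $\pi_i(f(s))=f_i(\pi_i(s))$, so that $t\in\conn_{\Pi_i\Mt_i}(s_1,\dots,s_k)$ means $\pi_i(t)\in\conn_{\Mt_i}(\pi_i(s_1),\dots,\pi_i(s_k))$ for every $i\in I$; applying the homomorphism property of each $f_i$ immediately yields $f(t)\in\conn_{\Pi_i\Mt'_i}(f(s_1),\dots,f(s_k))$. Strictness follows from the equivalence $s\in\Pi_iD_i\iff\pi_i(s)\in D_i$ for all $i$, combined with strictness of each $f_i$. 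If each $f_i$ is covering, then for any $s'\in\Pi_iA'_i$ I can choose, componentwise, a preimage $\pi_i(s)\in f_i^{-1}(\pi_i(s'))$; and given $z'\in\conn_{\Pi_i\Mt'_i}(x'_1,\dots,x'_k)$ I similarly choose, coordinate by coordinate, witnesses $y_j^i,z^i\in A_i$ with $z^i\in\conn_{\Mt_i}(y_1^i,\dots,y_k^i)$, $f_i(y_j^i)=\pi_i(x'_j)$ and $f_i(z^i)=\pi_i(z')$, and assemble them into global tuples.

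For $f_\Uf$, I would first verify well-definedness: if $s\equiv s'$ then $\{i:\pi_i(s)=\pi_i(s')\}\in\Uf$, and this set is contained in $\{i:\pi_i(f(s))=\pi_i(f(s'))\}$, which is therefore also in $\Uf$, so $f(s)\equiv' f(s')$. For the homomorphism property, I invoke Lemma~\ref{lem:up}: $[s]_\equiv\in\conn_\equiv([s_1]_\equiv,\dots,[s_k]_\equiv)$ is characterised by a set $X\in\Uf$ on which $\pi_i(s)\in\conn_{\Mt_i}(\pi_i(s_1),\dots,\pi_i(s_k))$, and applying each $f_i$ (a homomorphism) shows that $X$ is contained in the analogous set characterising $[f(s)]_{\equiv'}\in\conn_{\equiv'}([f(s_1)]_{\equiv'},\dots,[f(s_k)]_{\equiv'})$. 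Strictness follows by a dual argument applied to the designated elements: $[s]_\equiv$ is designated iff $\{i:\pi_i(s)\in D_i\}\in\Uf$, which by strictness of each $f_i$ coincides with $\{i:\pi_i(f(s))\in D'_i\}$.

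Finally, for the covering part of $f_\Uf$, suppose each $f_i$ is covering. Surjectivity is straightforward: given $[t']_{\equiv'}$, choose $\pi_i(s)\in f_i^{-1}(\pi_i(t'))$ for each $i$, which yields $f_\Uf([s]_\equiv)=[t']_{\equiv'}$. For the more delicate part, given $[z']_{\equiv'}\in\conn_{\equiv'}([x'_1]_{\equiv'},\dots,[x'_k]_{\equiv'})$, Lemma~\ref{lem:up} supplies a set $X\in\Uf$ on which $\pi_i(z')\in\conn_{\Mt'_i}(\pi_i(x'_1),\dots,\pi_i(x'_k))$; for $i\in X$ covering of $f_i$ provides $y_j^i,z^i$ with the required conditions, while for $i\notin X$ surjectivity of $f_i$ gives $y_j^i$ with $f_i(y_j^i)=\pi_i(x'_j)$, and we pick any $z^i\in\conn_{\Mt_i}(y_1^i,\dots,y_k^i)$ (non-empty by definition of multialgebra). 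Assembling the tuples $y_j,z\in\Pi_iA_i$ we obtain $z\in\conn_{\Pi_i\Mt_i}(y_1,\dots,y_k)$, and Lemma~\ref{lem:up} again yields $[z]_\equiv\in\conn_\equiv([y_1]_\equiv,\dots,[y_k]_\equiv)$, with $f_\Uf([y_j]_\equiv)=[x'_j]_{\equiv'}$ (true for all $i\in I$) and $f_\Uf([z]_\equiv)=[z']_{\equiv'}$ (witnessed by $X\in\Uf$). The main conceptual point, and the only step requiring care, is this last one: "covering" at the ultraproduct level is only guaranteed on a set in the filter, but because a $\Uf$-large set is all that matters and surjectivity lets us fill in the remaining coordinates arbitrarily, the argument goes through.
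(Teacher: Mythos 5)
Your proof is correct and follows essentially the same route as the paper's: componentwise verification for $f=\Pi_{i\in I}f_i$, Lemma~\ref{lem:up} to reduce membership in $\conn_{\equiv'}$ to a $\Uf$-large set $X$ of coordinates, and assembly of coordinatewise witnesses for the covering claims. You are in fact slightly more explicit than the paper in two places it treats as routine: the verification that $f$ and $f_\Uf$ are well-defined strict homomorphisms, and the handling of coordinates $i\notin X$ in the covering argument for $f_\Uf$ (where you correctly use surjectivity of $f_i$ plus non-emptiness of $\conn_{\Mt_i}$ to fill in arbitrary values, noting that only the $\Uf$-large set matters).
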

\begin{proof}
It is straightforward to check that $f=\Pi_{i\in I}f_i$ constitutes a strict homomorphism, 
that $f_\Uf$ is well defined, and that $f:\Pi_\Uf\Mt_i\to \Pi_\Uf\Mt'_i$ is also
a strict homomorphism. Let us show that $f$ and $f_\Uf$ are covering, when assuming that all the $f_i$ are. 
For convenience, let  $\Mt_a=\Pi_{i\in I}\Mt_i$ and $\Mt_b=\Pi_{i\in I}\Mt'_i$ (with $a,b\notin I$).

To see that $f$ is covering, let $t,t_1,\ldots,t_k\in \Pi_{i\in I}A_i$ with $t\in \conn_{\Mt_b}(t_1,\dots,t_k)$. Thus we have, for each $i\in I$, $\pi_i(t)\in \conn_{\Mt'_i}(\pi_i(t_1),\ldots,\pi_i(t_k))$. Since each $f_i$ is covering there 
exist $x^i,x^i_1,\ldots x^i_k\in A_i$ such that $x^i\in \conn_{\Mt_i}(x^i_1,\ldots,x^i_k)$, $f_i(x^i)=\pi_i(t)$ and
$f_i(x^i_j)=\pi_i(t_j)$ for $1\leq j\leq k$.
Hence, we have $s,s_1,\ldots,s_k\in \Pi_{i\in I}A_i$ such that, for each $i\in I$, $\pi_i(s)=x^i$ and $\pi_i(s_j)=x^i_j$ for $1\leq j\leq k$,
and $s\in \conn_{\Mt_a}(s_1,\ldots,s_k)$, $f(s)=t$ and $f(s_j)=t_j$ for $1\leq j\leq k$.

To see that $f_\Uf$ is covering,
let $[t]_{\equiv'}\in \conn_{\equiv'}([t_1]_{\equiv'},\dots,[t_k]_{\equiv'})$. Using Lemma~\ref{lem:up},
we know that $X=\{i\in I: \pi_i(t)\in \conn_{\Mt'_i}(\pi_i(t_1),\ldots,\pi_i(t_k))\}\in \Uf$.
Proceeding as in the above paragraph for 
each $i\in X$, we get $s,s_1,\ldots,s_k\in \Pi_{i\in I}A_i$ such that
   $X\subseteq  \{i\in I:f_i(\pi_i(s))=\pi_i(t)\} \in \Uf$, 
   $X\subseteq\{i\in I:f_i(\pi_i(s_j))=\pi_i(t_j)\} \in \Uf$ for every $1\leq j\leq k$, and thus
 $X\subseteq  \{i\in I:\pi_i(s)\in \conn_{\Mt_i}(\pi_i(s_1),\ldots,\pi_i(s_k))\} \in \Uf$. 
Thus, we have 
$f([s]_\equiv)=[t]_{\equiv'}$, $f([s_j]_{\equiv})=[t_j]_{\equiv'}$ for $1\leq j\leq k$ and
$[s]_\equiv\in \conn_{\equiv}([s_1]_\equiv,\dots,[s_k]_\equiv)$.
\end{proof}

\begin{lemma}\label{lemswap}
Let $\cM$ be a class of $\Sigma$-Nmatrices. If $\der{=}\der_{\cM}$ is compact, then 
 $\SubN(\Up(\Quo(\Rexp({\SubN}(\cM)))))\subseteq \Quo(\Rexp({\SubN}(\Up(\cM))))$.

\end{lemma}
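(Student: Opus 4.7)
The strategy is to first unpack the LHS, apply Lemma~\ref{lem:ultracover} to lift the fibrewise homomorphisms to the ultraproduct level, and then construct a preimage Nmatrix that simultaneously covers $\Mt$ and strict-maps to an ultraproduct of $\cM$.

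Unpacking $\Mt\in\SubN(\Up(\Quo(\Rexp(\SubN(\cM)))))$, we obtain a family $\{\Mt_i\}_{i\in I}\subseteq\cM$, subNmatrices $\Mt^s_i$ of $\Mt_i$, Nmatrices $\Mt^r_i$ with strict homomorphisms $h^r_i\colon\Mt^r_i\to\Mt^s_i$, $\der$-sound Nmatrices $\Mt^q_i$ with covering strict homomorphisms $h^q_i\colon\Mt^r_i\to\Mt^q_i$, an ultrafilter $\Uf$ on $I$, and an embedding $\Mt\hookrightarrow\Pi_\Uf\Mt^q_i$. Setting $\cU=\Pi_\Uf\Mt_i\in\Up(\cM)$, $\cR=\Pi_\Uf\Mt^r_i$, and $\cN=\Pi_\Uf\Mt^q_i$, Lemma~\ref{lem:ultracover} promotes $\{h^r_i\}$ and $\{h^q_i\}$ to strict homomorphisms $H^r\colon\cR\to\cU$ and $H^q\colon\cR\to\cN$, with $H^q$ covering. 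Proposition~\ref{soundup} together with compactness of $\der$ then ensures that $\cN$, and hence its subNmatrix $\Mt$, is $\der$-sound.

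To realise $\Mt$ as an element of $\Quo(\Rexp(\SubN(\Up(\cM))))$, I would take $\cR^+$ to be the subNmatrix of $\cR$ with universe $(H^q)^{-1}(A_\Mt)$, operations $\conn_{\cR^+}(x_1,\ldots,x_k)=\conn_\cR(x_1,\ldots,x_k)\cap (H^q)^{-1}(\conn_\Mt(H^q(x_1),\ldots,H^q(x_k)))$, and $D_{\cR^+}=D_\cR\cap A_{\cR^+}$. Granting that these operations are non-empty, $\cR^+$ is a subNmatrix of $\cR$; the composition of its inclusion with $H^r$ is a strict homomorphism $\cR^+\to\cU$, whence $\cR^+\in\Rexp(\SubN(\Up(\cM)))$. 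Furthermore $H^q|_{\cR^+}\colon\cR^+\to\Mt$ is strict and covering by construction, since any witness supplied by covering of $H^q$ has $H^q$-image in $A_\Mt$ and therefore lies in $\cR^+$. Consequently $\Mt\in\Quo(\cR^+)\subseteq\Quo(\Rexp(\SubN(\Up(\cM))))$.

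The main obstacle is verifying non-emptiness of $\conn_{\cR^+}(x_1,\ldots,x_k)$ for arbitrary $(x_1,\ldots,x_k)\in(\cR^+)^k$: covering of $H^q$ only produces witnesses for some tuple $(x'_1,\ldots,x'_k)$ with $H^q(x'_j)=H^q(x_j)$, which need not coincide with $(x_1,\ldots,x_k)$. My plan is to descend to the fibres via Lemma~\ref{lem:up}: since each $h^q_i$ is covering, at every fibre $i$ one selects a local witness in $\conn_{\Mt^r_i}$ whose $h^q_i$-image realises the required element of $\Mt$, exploiting the freedom to modify representatives of $x_j=[s_j]_{\equiv_\Uf}$ outside any prescribed $\Uf$-small set so that the fibrewise choices combine into an ultraproduct witness without altering the classes $x_j$. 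A fallback, should this uniform selection fail, is to enlarge $\cR^+$ by adjoining the necessary witness elements directly from $\cR$ (which automatically belong to $(H^q)^{-1}(A_\Mt)$) and to iterate until closure is achieved, while checking at each stage that the strict homomorphism to $\cU$ still extends.
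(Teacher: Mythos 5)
Your skeleton is the same as the paper's: unpack the left-hand side, lift the fibrewise homomorphisms to the ultraproducts via Lemma~\ref{lem:ultracover}, obtain $\der$-soundness of $\Mt$ from Proposition~\ref{soundup} together with Lemma~\ref{lem:rexpstronger}, and interpose an Nmatrix (your $\mathcal{R}^{+}$, the paper's $\Mt_a$, ``the preimage of $\Mt$ by the covering map'') that covers $\Mt$ and admits a strict homomorphism into something in $\SubN(\Up(\cM))$. The obstacle you isolate --- non-emptiness of $\conn_{\mathcal{R}^{+}}(x_1,\ldots,x_k)=\conn_{\mathcal{R}}(x_1,\ldots,x_k)\cap (H^q)^{-1}(\conn_\Mt(H^q(x_1),\ldots,H^q(x_k)))$ --- is precisely the step the paper's own proof performs without comment, so you have located the crux. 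Unfortunately, neither of your repairs closes it. For the first: with representatives $x_j=[t_j]_{\equiv_\Uf}$ fixed, the covering property of $h^q_i$ at a fibre $i$ yields witnesses $z^i_1,\ldots,z^i_k$ that agree with $\pi_i(t_1),\ldots,\pi_i(t_k)$ only after applying $h^q_i$, not as elements of the $i$-th carrier; this mismatch occurs on the whole $\Uf$-large set on which covering is invoked, so it cannot be absorbed by altering representatives on sets outside $\Uf$. What the fibrewise argument actually produces is a witness for some tuple $(x'_1,\ldots,x'_k)$ with $H^q(x'_j)=H^q(x_j)$ --- enough for the covering property of $H^q$ restricted to $\mathcal{R}^{+}$, but not for non-emptiness at the originally given tuple. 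As for the fallback: the universe of $\mathcal{R}^{+}$ already equals $(H^q)^{-1}(A_\Mt)$, so there is nothing left to adjoin, and enlarging the operations beyond the displayed intersection destroys either the homomorphism onto $\Mt$ or the one into the ultraproduct of $\cM$.

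The difficulty is not an artifact of your bookkeeping. Already for a single covering strict homomorphism $g:\Mt'\to\Mt''$ and a subNmatrix $\Mt$ of $\Mt''$ the analogous intersection can be empty: take a binary $\conn$ with $A''=\{0,1\}$ and $\conn_{\Mt''}(x,y)=\{0,1\}$ throughout, $A'=\{0,1,1'\}$ with $g(0)=0$, $g(1)=g(1')=1$, $\conn_{\Mt'}(1,1)=\{0\}$, $\conn_{\Mt'}(1',1')=\{1\}$ and $\conn_{\Mt'}(x,y)=A'$ otherwise; then $g$ is covering, but for the subNmatrix $\Mt$ of $\Mt''$ with $\conn_\Mt(1,1)=\{1\}$ one gets $\conn_{\Mt'}(1,1)\cap g^{-1}(\conn_\Mt(1,1))=\emptyset$, and in fact $g$ restricted to no subNmatrix of $\Mt'$ covers $\Mt$. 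So non-emptiness is not a formal consequence of the covering hypothesis; establishing it, or replacing the intermediate Nmatrix by a genuinely different construction, requires using more about the data produced by the unpacking than either your proposal or the paper's two-line version of this step currently does. As written, your argument leaves the lemma unproved at exactly this point.
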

\begin{proof}
If $\Mt=\tuple{A,\cdot_\Mt,D}\in \SubN(\Up(\Quo(\Rexp({\SubN}(\cM)))))$ then
$\Mt\in \SubN(\{\Pi_\Uf\Mt_i\})$ for some family of $\Sigma$-Nmatrices $\Mt_i\in\cM$ for $i\in I$, and some ultrafilter $\Uf$ on $I$, and for each $i\in I$ there are:
\begin{itemize}
      
\item covering strict homomorphisms $g_i:\Mt'_i\to \Mt_i$ 
with $\Mt_i=\tuple{A_i,\cdot_i,D_i}$ and $\Mt'_i=\tuple{A'_i,\cdot'_i,D'_i}$
such that 
$\Mt_i=g_i[\Mt'_i]$ and $\der{\subseteq} \der_{\Mt_i}$,

\item covering strict homomorphisms $h_i:\Mt'_i\to \Mt^-_i$ such that $\Mt^-_i=h_i[\Mt'_i]$, and

\item  $\Mt^-_i\in \SubN(\Mt^+_i)$ with $\Mt^+_i\in \cM$.

\end{itemize}

By Lemma~\ref{lem:ultracover} there are covering strict homomorphisms $g_\Uf:\Pi_\Uf \Mt'_i\to \Pi_\Uf \Mt_i$ and $h_\Uf:\Pi_\Uf \Mt'_i\to \Pi_\Uf \Mt^-_i$.
Let $\Mt_a\in \SubN(\Pi_\Uf \Mt'_i)$ be the preimage of $\Mt\in \SubN(\Pi_\Uf \Mt_i)$ by $g_\Uf$, and 
$\Mt_b\in \SubN(\Pi_\Uf \Mt^-_i)$ be the image of $\Mt_a$ by $h_\Uf$. So $\Mt=g_\Uf[\Mt_a]$ and $\Mt_b=h_\Uf[\Mt_a]$.
Further, we have that
$\Mt_b\in \SubN(\Pi_\Uf \Mt^-_i)\subseteq\SubN(\Pi_\Uf \Mt^+_i)\subseteq\SubN(\Up(\cM))$ and therefore
 $\Mt_a\in \Rexp(\{\Mt_b\})\subseteq \Rexp(\SubN(\Up(\cM)))$.
Therefore, as $\der$ is compact, we can use Proposition~\ref{soundup} along with Lemma~\ref{lem:rexpstronger} to get that $\der_\Mt{\subseteq} \der$,
and conclude that 
 $\Mt\in \Quo(\{\Mt_a\})\subseteq \Quo(\Rexp(\SubN(\Up(\cM))))$. 
   %
\end{proof}

The following characterization is now natural, and generalizes 
\cite[Thm.~I.7]{czela1983} (which is~Thm.~3.10 in Font and Jansana's
chapter included in the present book).

\begin{theorem}\label{soundmodelscompact}
Let $\cM$ be a class of $\Sigma$-Nmatrices. If $\der{=}\der_{\cM}$ is compact, then
 $\Nmatr(\der)=\Quo(\Rexp({\SubN}(\Up(\cM))))$.
\end{theorem}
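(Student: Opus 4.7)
The plan is to show both inclusions. The inclusion $\Quo(\Rexp(\SubN(\Up(\cM))))\subseteq \Nmatr(\der)$ follows by tracking $\der$-soundness through each operator in the composition. By Proposition~\ref{soundup}, $\Up$ preserves $\Nmatr(\der)$ since $\der$ is compact. The operator $\SubN$ also preserves $\der$-soundness, because if $\Mt_1$ is a subNmatrix of a $\der$-sound $\Mt_2$, the embedding is a strict homomorphism and Lemma~\ref{lem:rexpstronger} gives $\der \subseteq \der_{\Mt_2} \subseteq \der_{\Mt_1}$. The operator $\Rexp$ similarly preserves $\der$-soundness by Lemma~\ref{lem:rexpstronger} applied to the preimage homomorphism. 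Finally, $\Quo$ preserves $\der$-soundness by its very definition, which only collects those compatible quotients that are $\der$-sound. Starting from $\cM \subseteq \Nmatr(\der)$ and applying these four operators in sequence thus remains inside $\Nmatr(\der)$.

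For the reverse inclusion $\Nmatr(\der)\subseteq \Quo(\Rexp(\SubN(\Up(\cM))))$, fix an arbitrary $\Mt\in\Nmatr(\der)$ and let $\cM_\Mt$ be the set of all non-empty countable subNmatrices of $\Mt$. Each element of $\cM_\Mt$ is itself $\der$-sound (by the argument above for $\SubN$), and Proposition~\ref{prop:denum} gives
\[
\Mt \in \SubN(\Up(\cM_\Mt)).
\]
Since each $\Mt' \in \cM_\Mt$ is countable and $\der$-sound, Proposition~\ref{breakintoden} yields $\Mt' \in \Quo(\cL_\der)$. On the other hand, Lemma~\ref{valrexp} (using $\der = \der_\cM$) tells us that $\cL_\der = \cL_{\der_\cM} \subseteq \Rexp(\SubN(\cM))$. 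Combining these,
\[
\cM_\Mt \subseteq \Quo(\Rexp(\SubN(\cM))),
\]
and consequently $\Mt \in \SubN(\Up(\Quo(\Rexp(\SubN(\cM)))))$.

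At this point the operators are in the ``wrong'' order, and the final step invokes Lemma~\ref{lemswap} to reshuffle them: since $\der = \der_\cM$ is compact,
\[
\SubN(\Up(\Quo(\Rexp(\SubN(\cM))))) \subseteq \Quo(\Rexp(\SubN(\Up(\cM)))),
\]
which places $\Mt$ in the desired class. The main conceptual obstacle is precisely this reshuffling: in the non-deterministic setting, images and preimages do not commute (as stressed in the introduction), so one cannot move $\SubN$ and $\Up$ past $\Quo$ and $\Rexp$ directly. Lemma~\ref{lemswap} does this work in a single blow, relying on the compactness assumption (via Proposition~\ref{soundup}) to guarantee that the intermediate ultraproducts remain $\der$-sound. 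Together the two inclusions give $\Nmatr(\der)=\Quo(\Rexp(\SubN(\Up(\cM))))$.
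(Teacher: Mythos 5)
Your proof is correct and follows essentially the same route as the paper: Lemma~\ref{valrexp} and Proposition~\ref{breakintoden} to place countable sound Nmatrices in $\Quo(\Rexp(\SubN(\cM)))$, Proposition~\ref{prop:denum} to recover arbitrary models via $\SubN(\Up(\cdot))$, and Lemma~\ref{lemswap} to reorder the operators. The only (harmless) difference is that the converse inclusion is immediate from the definition of $\Quo$, which by construction outputs only $\der$-sound Nmatrices, so the soundness-tracking through $\Up$, $\SubN$ and $\Rexp$ is not actually needed.
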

\begin{proof}
From Lemma~\ref{valrexp} we know that 
 $\cL_\der\subseteq \Rexp({\SubN}(\cM))$,
 and by Proposition~\ref{breakintoden}
 that
every countable $\der$-sound Nmatrix is 
in $\cL^{\mathsf{quo}}_\der=\Quo(\cL_\der)\subseteq \Quo(\Rexp({\SubN}(\cM)))$.
By
Proposition~\ref{prop:denum}
 we conclude that $\Nmatr(\der_\cM)=\SubN(\Up(\Quo(\Rexp({\SubN}(\cM)))))$. Finally, the result follows by Lemma~\ref{lemswap}.
\end{proof}
Note that, in contrast to what happens in the case of ordinary logical matrices, in our setting 
we have $\Quo\circ\Rexp\neq \Rexp\circ\Quo$, as can be noticed by an inspection of Example~\ref{exquo}. Thus we cannot generate all sound models of $\der$ using exactly the sequence of operators employed in~\cite{czela1983}.

 \section{Conclusions and futher work}\label{sec:conc}

In the previous sections we have  generalized the results of  Czelakowski~\cite{czela1983} about multiple-conclusion logics to the setting of Nmatrix semantics,  highlighting  the main   novelties  and 
 challenges that arise when extending the 
algebraic logic techniques in order to deal with  non-determinism.
Let us conclude by listing a few  possible opportunities for future investigation suggested by the 
crucial differences brought by non-determinism.  
\smallskip

{
As is well known, many of the most prominent logics -- the classical, the intuitionistic,
several  many-valued and modal systems -- 
are \emph{algebraizable}
in the sense of Blok and Pigozzi~\cite{blokpig}. For a single-conclusion logic $\mathcal{L}$, this entails that its consequence relation
can be faithfully interpreted into the relative equational consequence relation of a uniquely determined class 
$\mathsf{alg}(\mathcal{L})$
of universal algebras,
known as the \emph{equivalent algebraic semantics} of $\mathcal{L}$. The theory of algebraizability guarantees that 
$\mathsf{alg}(\mathcal{L})$ will be axiomatized by means of \emph{generalized quasi-equations}, that is,
 first-order conditionals
having  a possibly infinite conjunction of equations as premiss and a single equation as conclusion;
further mild assumptions, which virtually cover all the non \emph{ad hoc} examples in the literature, ensure that 
the conjunction in the premiss of each
first-order formula
may be taken to be finite: thus  $\mathsf{alg}(\mathcal{L})$
will be a \emph{quasi-variety} of algebras. From the perspective of algebraic logic, 
the study of algebraizable single-conclusion logics may in this sense 
be reduced to the universal algebraic study of (generalized) quasi-varieties. 

In a  multiple-conclusion setting we may ask ourselves, in the first place,
whether and how one could single out a fruitful notion of an algebraizable multiple-conclusion logic;
and, secondly,  
which  class of (multi)algebras
one should look at as a potential candidate for 
the algebraic
counterpart of a  multiple-conclusion logic. 
It is an easy guess that the first-order sentences required for axiomatizing
such a class  
will be conditionals having a conjunction of equations as a premiss
and a disjunction as a conclusion.
We give a more detailed suggestion
in this direction in the following example.

\begin{example}
Given a class $\mathcal{A}$ of $\Sigma$-multialgebras, let 
$\mathsf{Eqs}=\{\varphi=\psi:\varphi,\psi\in Fm\}$, and further define $\der_{\mathcal{A}}{\subseteq} \,\wp(\mathsf{Eqs})\times \wp(\mathsf{Eqs})$ as follows:  
$$\{\varphi_i=\psi_i:i\in I\}\der_{\mathcal{A}} \{\varphi_j=\psi_j:j\in J\}$$
whenever 
for every $\mathbf{A}\in  \mathcal{A}$  and for every multialgebra morphism $h:\Fm\to \mathbf{A}$   
$h(\varphi_i)=h(\psi_i)$ for every $i\in I$ implies   $h(\varphi_j)=h(\psi_j)$ for some $j\in J$.

It is easy to verify that $\der_{\mathcal{A}}$ satisfies suitably adapted versions of the properties $(O)$, $(D)$, $(C)$ and $(S)$
in the setting of a `multiple-conclusion logic' that
manipulates equations instead of formulas (i.e.,~a 2-deductive system). Clearly, 
we have that
$R_{\mathsf{eq}} =\{{\mathsf{r_{ref}}},{\mathsf{r_{symm}}},{\mathsf{r_{trans}}}\}\,{\subseteq} \der_{\mathcal{A}}$, where the rules are defined as follows:
$$\frac{}{x=x}~{\mathsf{r_{ref}}}\qquad\frac{x=y}{y=x}~{\mathsf{r_{symm}}}\qquad\frac{x=y\,,\,y=z}{x=z}~{\mathsf{r_{trans}}}$$
Given a multialgebra $\mathbf{A}=\{A,\cdot_\mathbf{A}\}$
such that 
for each $a\in A$ there is $\tilde{a}\in \Sigma^{0}$ with
$\tilde{a}_\mathbf{A}=\{a\}$,
we also have that 
$R_\mathbf{A}=R_{\mathsf{eq}}\cup R_\exists\cup R_{\neq} \cup R_\Sigma\, {\subseteq} \der_{\{\mathbf{A}\}}$
 with the following sets of rules. 
\begin{align*}
     R_\exists&=\left\{\frac{}{\{x=\tilde{a}:a\in A\}}\right\}\\   
     R_{\neq}&= \left\{\frac{x=\tilde{a}\,,\,x=\tilde{b}}{}: a\neq b\in A\right\}\\
R_\Sigma&=\left\{\mathsf{r}_{\conn,a_1,\ldots,a_k}:\conn\in \Sigma^{(k)},a_1,\ldots, a_k\in A\right\}\\ 
 {\mathsf{r}_{\conn,a_1,\ldots,a_k}}&=\frac{\{x_j=\tilde{a}_j:1\leq j\leq k\}}{\{\conn(x_1,\ldots,x_k)=\tilde{b}:b\in \conn_{\mathbb{A}}(a_1,\ldots, a_k)\}}
\end{align*}
It is also not hard to verify that $R_\mathbf{A}$ forms a basis for 
$\der_{\{\mathbf{A}\}}$. 
In particular, when $\mathbf{A}$ is a (deterministic multi)algebra,  congruentiality is derivable.
Indeed, assuming $x_i=y_i$ for each $1\leq i\leq k$, and using the equality rules $R_{\mathsf{eq}}$
 and $R_{\exists}$ 
 we conclude 
that there are $\tilde{a}_{i}\in \Sigma^{0}$ such that
$x_i=y_i=\tilde{a}_{i}$. Then, since $\mathbf{A}$ is deterministic the rule ${\mathsf{r}_{\conn,a_1,\ldots,a_k}}$ has only one conclusion, i.e., it matches the single conclusion rule 
$\frac{\{x_j=\tilde{a}_{j}:1\leq j\leq k\}}{\conn(x_1,\ldots,x_k)=\tilde{b}}$ for $\conn_{\mathbb{A}}(a_1,\ldots, a_k)\}=\{b\}$
and thus we obtain that 
$\conn(x_1,\ldots,x_k)=\conn(y_1,\ldots,y_k)$.

Given a subset of designated elements $D\subseteq A$ and $\Mt=\tuple{A,\cdot_\mathbf{A},D}$
we can recover the logic $\der_{\Mt}$ given by
\begin{center}
 $\Gamma\der_{\Mt}\Delta$\\  iff 

for every function $f:\Gamma\to \{\tilde{a}:a\in D\}$,

 $\{\gamma=f(\gamma):\gamma\in \Gamma\}\der_{\{\mathbf{A}\}} \{\delta=\tilde{a}:\delta\in \Delta,a\in D\}.$ 
 
\end{center}
Thus, under the conditions of this toy example, we can associate to a logic induced by an Nmatrix 
clauses of equations into which the logic can be embedded.
Given an arbitrary logic (whose Nmatrix semantics we do not know), we can then try to associate a class of multi-algebras 
when having something like algebraizability withouth the congruentiality assumption.
\hfill$\triangle$
\end{example}
}

A basis for a logic is usually seen as a Hilbert-style deductive system. 
In this paper, we have not explored the deductive side of multiple-conclusion bases, but 
the hand-waved derivation in the above example could be formalized as a tree-like proof~\cite{ShoesmithSmiley,synt,wollic19}.
Further, the connection between logics and clausal equational logics, also hinted at in the above example, may be an interesting bridge to explore, namely by importing analyticity results in logic to produce symbolic decision procedures for equational reasoning~\cite{synt}.\smallskip

As  mentioned in the Introduction,   we do not currently have a theory of logical congruences for non-deterministic matrices, nor a standard counterpart to what is   the Leibniz operator for ordinary matrices. We have also observed earlier that it is reasonable to anticipate that the role of logical congruences will be played by ``logical equivalence relations'', i.e.,~compatible equivalence relations. 
{
Solutions to the associated problem of what is the right generalization of the notion of reduced model  may lead to natural extensions of Corollary I.14 in~\cite{czela1983} (Thm.~3.11 in Font and Jansana's
chapter included in the present book).
One would also wish to introduce a 
finer notion of homomorphism between two Nmatrices which could guarantee that both define the same logic    
 -- or at least a logic not weaker than some fixed ambient logic possibly given by a finite Nmatrix.
In this respect, a potential difficulty may stem from the following observation: 
while the problem of determining whether two finite deterministic matrices
define the same logic  is known to be decidable (for both the single- and the multiple-conclusion case),  
we  know as well that this is not the case, at least in the single-conclusion case, for finite Nmatrices \cite{FiMaCa22JLC}.  

Another point where the theory of Nmatrices seems to diverge further from that of ordinary matrices is 
in regard to  Corollary I.13 of Czelakowski~\cite{czela1983} stating that the multiple-conclusion logic
defined by a finite deterministic matrix only admits finitely many strengthenings. We know that this result does not
extend to finite Nmatrices: for example, the strengthenings of the logic of the unconstrained 2-valued Nmatrix ($\mathbb{U}_{1,1}$ in Example~\ref{ex2val}) are all the possible logics. 

{
Finally, an obvious direction to look for further generalizations of the results presented here is 
the theory of partial Nmatrices (PNmatrices). 
 In order to obtain
 information on 
the class $\mathsf{PNmatr}(\der)$ of all partial non-deterministic  models of a logic $\der$,
we may start by observing that  
 every PNmatrix is a quotient of a coproduct of its subNmatrices.
The logic induced by a class of Nmatrices is not necessarily the same as that of all its submatrices,
but the logic induced by a class of PNmatrices is the same as that of all its subNmatrices, which can be 
glued back together into the original PNmatrix.

\section*{Acknowledgments}
Carlos Caleiro's and S\'{e}rgio Marcelino's research was done under the scope of project FCT/MCTES through national funds and when applicable co-funded by EU under the project UIDB/50008/2020.

\end{document}